\theoremstyle{plain}
\newtheorem{theorem}{Theorem}[section]
\theoremstyle{remark}
\newtheorem{remark}[theorem]{Remark}
\theoremstyle{plain}
\newtheorem{corollary}[theorem]{Corollary}
\newtheorem{lemma}[theorem]{Lemma}
\newtheorem{proposition}[theorem]{Proposition}
\newtheorem{definition}[theorem]{Definition}
\numberwithin{equation}{section}
\newcommand\sgn{\operatorname{sgn}}
\newcommand\diver{\operatorname{div}}
\newcommand\curl{\operatorname{curl}}
\newcommand{\eps}{\varepsilon}
\newcommand{\al}{\alpha}
\def\Nat{{\mathbb N}}
\def\Rnu{{\mathbb R}}
\def\Rd{{\Rnu^d~}}
\def\liml{\lim\limits}
\def\liminfl{\liminf\limits}
\def\limsupl{\limsup\limits}
\def\suml{\sum\limits}
\def\supl{\sup\limits}
\def\maxl{\max\limits}
\def\intl{\int\limits}
\def\supl{\mathop{\sup}\limits}
\begin{document}
\title{Beale-Kato-Majda type condition for Burgers equation}

\author{Ben Goldys, Misha Neklyudov}

\address{School of Mathematics and Statistics,
University of NSW, Sydney,  Australia} \thanks{This work was
supported by an ARC Discovery grant}

\maketitle
\begin{abstract}
We consider a multidimensional Burgers equation on the torus
$\mathbb{T}^d$ and the whole space $\Rd$. We show that, in case of
the torus, there exists a unique global solution in Lebesgue
spaces. For a torus we also provide estimates on the large time
behaviour of solutions. In the case of $\Rd$ we establish the
existence of a unique global solution if a Beale-Kato-Majda type
condition is satisfied. To prove these results we use the
probabilistic arguments which seem to be new.
\end{abstract}
In this paper we are concerned with the following multidimensional
Burgers equation:
\begin{eqnarray}
\frac{\partial u^i}{\partial t}+\suml_{j=1}^nu^j\frac{\partial
u^i}{\partial x_j}=\nu\triangle u^i+f^i,\quad,t\in[0,T]\label{eqn:BurgersEquation-1}\\
u(0)=u_0,i=1,\ldots,d, x\in \mathcal{O},\nonumber\\
u_0\in L^p(\mathcal{O},\Rd),f\in
L^p(0,T;L^p(\mathcal{O},\Rd)),p\geq d,\nonumber
\end{eqnarray}
where $\mathcal{O}$ is either the torus $\mathbb{T}^d$ or the full
space $\mathbb{R}^d$. Equations of this type arise in the theory
of conservation laws, see for example \cite{serre} and are also
known as simplified models of turbulence.
\par\noindent
If the external force $f$ is of potential type, $f=\nabla U$ and
the initial condition $u_0=\nabla U_0$ is of gradient type as
well, the existence and uniqueness of solutions is well known, see
for example \cite{Taylor} and references therein. These
assumptions however, are too restrictive in many problems. For
example the Burgers equation with data of non-potential type
arises in some problems of gas dynamics and inelastic granular
media (see \cite{chen}). It is also important to consider a more
general Burgers equation in the analysis of turbulence. The
question of the existence and uniqueness of solutions in case of
data $f,u_0$ of non-gradient type seems to be completely open. In
this paper we will consider a general case, where $f$ and $u_0$
need not be of gradient type.  Our main result is that under some,
rather mild conditions, the existence of a unique global solution
in the whole space is implied by a version of the Beale-Kato-Majda
condition, that is well known in the theory of the Navier-Stokes
equation. 
Also we prove, without any additional assumptions, the existence
and uniqueness of global solution of Burgers equation on the
torus. In the last part of this paper we obtain an upper bound for
the growth of solutions for time tending to infinity.
\par\noindent
Let us recall some standard notations that will be used throughout
the paper. Suppose that $H^{\al,p}(\mathcal{O})$ - closure of
$C_0^{\infty}(\mathcal{O})$ w.r. to the norm
$||f||_{\al,p}=||(I-\triangle)^{\frac{\al}{2}}f||_p$, $\al\in
\mathbb{R}$, $p\geq 1$. In what follows we use the notation
$F(u,v)=(u\nabla)v$, $F(u)=F(u,u)$,
$\cdot'=\frac{\partial}{\partial t}$.
\begin{definition}\label{def:MildSolBurgersEqn}
We say that $u\in L^{\infty}(0,T;L^p(\mathcal{O},\Rd))$ is a mild
solution of Burgers equation with the initial condition $u_0\in
L^p(\mathcal{O},\Rd)$ and force $f\in
L^1(0,T;L^p(\mathcal{O},\Rd))$ if $F(u)\in
L^1(0,T;L^p(\mathcal{O},\Rd))$ and $u$ satisfies following
equality
\begin{equation}
u(t)=S_{t}^{\nu}u_0-\intl_0^tS_{t-s}^{\nu}(F(u(s))-f(s))ds,t\in
[0,T].\label{eqn:BurgersEquation-2}
\end{equation}
where $\{S_t^{\nu}=e^{\nu t\triangle}\}_{t\geq
0}:\mathcal{O}\to\Rd$ is a heat semigroup on $\mathcal{O}$. We
assume that $S_t^{\nu}$ acts on vector functions componentwise.
\end{definition}
\section{Local existence of solution}
The local existence of solution to Burgers equation in
$L^p(\mathcal{O},\Rd)$ spaces can be shown in the same way as for
the Navier-Stokes equation (see
\cite{FujitaKato-1964},\cite{FujitaKato-1962},\cite{Kato-1984},\cite{Weissler-1980},\cite{FabesJonesRiviere-1972}
and others). Here we only state main points of the proof following
the work of Weissler \cite{Weissler-1980}.

We will use following abstract theorem proved in
\cite{Weissler-1980}(p.222, theorem 2), see also
\cite{FujitaKato-1962} and \cite{Kato-1984}.
\begin{theorem}\label{thm:AbstractExistence}
Let $W$, $X$, $Y$, $Z$ be Banach spaces continuously embedded in
some topological vector space $\mathcal{X}$. $R_t=e^{tA},t\geq 0$
be $C_0$-semigroup on X, which satisfies the following additional
conditions
\begin{trivlist}
\item[(a1)] For each $t>0$, $R_t$ extends to a bounded map $W\to
X$. For some $a>0$ there are positive constants $C$ and $T$ such
that
\begin{equation}
|R_th|_X\leq Ct^{-a}|h|_W,h\in W,t\in(0,T].\label{eqn:a1Cond}
\end{equation}
\item[(a2)]For each $t>0$, $R_t$ is a bounded map $X\to Y$. For
some $b>0$ there are positive constants $C$ and $T$ such that
\begin{equation}
|R_th|_Y\leq Ct^{-b}|h|_X,h\in X,t\in(0,T].\label{eqn:a2Cond}
\end{equation}
Furthermore, function $|R_th|_Y\in C((0,T]), h\in X$ and
\begin{equation}
\liml_{t\to 0+}t^b|R_th|_Y=0,\forall h\in X.\label{eqn:a2Cond'}
\end{equation}

\item[(a3)]For each $t>0$, $R_t$ is a bounded map $X\to Z$. For
some $c>0$ there are positive constants $C$ and $T$ such that
\begin{equation}
|R_th|_Z\leq Ct^{-c}|h|_X,h\in X,t\in(0,T].\label{eqn:a3Cond}
\end{equation}
Furthermore, function $|R_th|_Z\in C((0,T]), h\in X$ and
\begin{equation}
\liml_{t\to 0+}t^c|R_th|_Z=0,\forall h\in X.\label{eqn:a3Cond'}
\end{equation}
\end{trivlist}
Let also $G:Y\times Z\to W$ be a bounded bilinear map, and let
$G(u)=G(u,u),u\in Y\cap Z$, $f\in L^{\infty}(0,T;W)$. Assume also
that $a+b+c\leq 1$.

Then for each $u_0\in X$ there is $T>0$ and unique function
$u:[0,T]\to X$ such that:
\begin{trivlist}
\item[(a)] $u\in C([0,T],X)$, $u(0)=u_0$. \item[(b)] $u\in
C((0,T],Y)$, $\liml_{t\to 0+}t^b|u(t)|_Y=0$. \item[(c)] $u\in
C((0,T],Z)$, $\liml_{t\to 0+}t^c|u(t)|_Z=0$.\item[(d)]
$$
u(t)=R_tu_0+\intl_0^tR_{t-\tau}(G(u(\tau))+f(\tau))d\tau,t\in
[0,T]
$$
\end{trivlist}
\end{theorem}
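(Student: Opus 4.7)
The plan is to apply the Banach fixed point theorem to the integral operator
\[
\Phi(u)(t) := R_t u_0 + \int_0^t R_{t-\tau}\bigl(G(u(\tau)) + f(\tau)\bigr)\,d\tau
\]
on the complete metric space
\[
\mathcal{E}_T := \bigl\{u \in C([0,T],X)\cap C((0,T],Y)\cap C((0,T],Z) :\ \|u\|_T<\infty,\ t^b|u(t)|_Y \to 0,\ t^c|u(t)|_Z \to 0 \text{ as } t\to 0^+\bigr\}
\]
equipped with the norm
\[
\|u\|_T := \sup_{t\in[0,T]}|u(t)|_X + \sup_{t\in(0,T]}t^b|u(t)|_Y + \sup_{t\in(0,T]}t^c|u(t)|_Z.
\]
That $t\mapsto R_t u_0$ lies in $\mathcal{E}_T$ is immediate from strong continuity on $X$ together with the vanishing clauses in (a2) and (a3).

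The heart of the argument is a trilinear estimate for the Duhamel term. Using (a1), the bilinearity bound $|G(u,v)|_W\leq \|G\|\,|u|_Y|v|_Z$, and the defining inequalities $|u(\tau)|_Y\leq \tau^{-b}\|u\|_T$, $|v(\tau)|_Z\leq \tau^{-c}\|v\|_T$, one obtains
\[
\Bigl|\int_0^t R_{t-\tau}G(u(\tau),v(\tau))\,d\tau\Bigr|_X \leq C\|G\|B(1-a,1-b-c)\,t^{1-a-b-c}\,\|u\|_T\|v\|_T,
\]
and analogous bounds with additional prefactors $t^{-b}$, $t^{-c}$ in the weighted $Y$- and $Z$-seminorms via (a2) and (a3). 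The Beta integral converges because $a<1$ and $b+c<1$, both automatic from positivity of $a,b,c$ and $a+b+c\leq 1$. The forcing term $f\in L^\infty(0,T;W)$ is estimated in exactly the same way. Collecting everything produces
\[
\|\Phi(u)-\Phi(v)\|_T \leq K(T)\bigl(\|u\|_T+\|v\|_T\bigr)\|u-v\|_T, \qquad \|\Phi(u)\|_T \leq M(T) + K(T)\|u\|_T^2
\]
for nondecreasing functions $K,M$ depending only on the data.

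A standard ball argument then yields a unique fixed point $u\in\mathcal{E}_T$. In the strict case $a+b+c<1$, both $K(T)$ and $M(T)-\sup_{[0,T]}|R_tu_0|_X$ carry a positive power of $T$, so shrinking $T$ makes a ball of fixed radius invariant and $\Phi$ contractive. The main obstacle is the borderline case $a+b+c=1$, where the exponent $t^{1-a-b-c}$ degenerates to $t^0$ and $K$ does not shrink with $T$. Here the required smallness has to be extracted from the vanishing conditions themselves: since $\sup_{(0,T]}\bigl(s^b|R_s u_0|_Y + s^c|R_s u_0|_Z\bigr)\to 0$ as $T\to 0^+$ by (a2) and (a3), the weighted $Y$- and $Z$-seminorms of the linear piece of $\Phi(u)$ can be made arbitrarily small, which restores invariance of a \emph{small} ball and contractivity on it. Finally, properties (a)--(d) are immediate once the fixed point is in hand: the $Y$- and $Z$-clauses are built into $\mathcal{E}_T$, and $X$-continuity at $0^+$ follows from strong continuity of $R_t u_0$ and the vanishing of the Duhamel integral in $X$ as $t\to 0^+$ (via the factor $t^{1-a-b-c}$ in the strict case, and via the vanishing of the weighted $Y,Z$-seminorms of $u$ near $0$ in the borderline case).
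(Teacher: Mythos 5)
Your proposal is correct and follows essentially the same route as the paper's appendix proof: a Banach fixed point argument in the weighted space with norm $\sup_t|u|_X+\sup_t t^b|u|_Y+\sup_t t^c|u|_Z$, with the trilinear Duhamel estimate producing the factor $t^{1-(a+b+c)}$ and, in the critical case $a+b+c=1$, the needed smallness extracted from the vanishing conditions \eqref{eqn:a2Cond'}--\eqref{eqn:a3Cond'} applied to the linear part so that a small ball is invariant and the map contractive. The only cosmetic difference is that you build the vanishing clauses into the underlying space, whereas the paper works on the ball $M(\alpha,\beta,T)$ and defers the verification of the asymptotics at $t\to 0^+$ to Weissler's argument.
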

\begin{remark}
Weissler \cite{Weissler-1980} considers only the case of $f=0$.
The general case follows similarly (see appendix for the proof).
\end{remark}
In the next proposition we will summarize properties of heat
semigroup $S_t^{\nu}=e^{\nu t\triangle},t\geq 0$ on $\mathcal{O}$.
\begin{proposition}\label{prop:HeatSemigroup}
\begin{trivlist}
\item[(i)]
\begin{eqnarray}
|\nabla^me^{t\triangle}h|_{L^q(\mathcal{O},\Rd)}\leq c
t^{-\frac{m}{2}-\frac{d}{2r}}|h|_{L^p(\mathcal{O},\Rd)},t\in (0,T],\label{eqn:HeatSemigroupEstimate-1}\\
\frac{1}{r}=\frac{1}{p}-\frac{1}{q},1<p\leq q<\infty,h\in
L^p(\mathcal{O},\Rd).\nonumber
\end{eqnarray}
Furthermore,
\begin{equation}
\liml_{t\to
0+}t^{\frac{m}{2}+\frac{d}{2r}}|\nabla^me^{t\triangle}h|_{L^q(\mathcal{O},\Rd)}=0,
\quad h\in L^p(\mathcal{O},\Rd).\label{eqn:HeatSemigroupLimit-1}
\end{equation}
\item[(ii)]Let $p\in (1,\infty)$ and $\al<\beta$. Then for any
$t>0$ $e^{t\triangle}$ is a bounded map
$H^{\al,p}(\mathcal{O},\Rd)\to H^{\beta,p}(\mathcal{O},\Rd)$.
Moreover, for each $T>0$ there exists $C=C(p,\al,\beta)$, such
that
\begin{equation}
|e^{t\triangle}h|_{H^{\beta,p}(\mathcal{O},\Rd)}\leq
Ct^{(\al-\beta)/2}|h|_{H^{\al,p}(\mathcal{O},\Rd)},t\in (0,T],h\in
H^{\al,p}(\mathcal{O},\Rd). \label{eqn:HeatSemigroupEstimate-2}
\end{equation}
Furthermore,
\begin{equation}
\liml_{t\to
0+}t^{(\beta-\al)/2}|e^{t\triangle}h|_{H^{\beta,p}}=0,\quad h\in
H^{\al,p}(\mathcal{O},\Rd).\label{eqn:HeatSemigroupLimit-2}
\end{equation}
\item[(iii)] Let $p\in(1,\infty)$. Then for any $t>0$,
$e^{t\triangle}:L^p(\mathcal{O},\Rd)\to H^{1,p}(\mathcal{O},\Rd)$
is a bounded map. Moreover, for each $T>0$ there exists
$C=C(p,T)$, such that
\begin{equation}
|e^{t\triangle}h|_{H^{1,p}(\mathcal{O},\Rd)}\leq
Ct^{-\frac{1}{2}}|h|_{L^p(\mathcal{O},\Rd)},t\in (0,T],h\in
L^p(\mathcal{O},\Rd).\label{eqn:HeatSemigroupEstimate-3}
\end{equation}
Furthermore,
\begin{equation}
\liml_{t\to
0+}t^{\frac{1}{2}}|e^{t\triangle}h|_{H^{1,p}(\mathcal{O},\Rd)}=0,\quad
h\in L^{p}(\mathcal{O},\Rd).\label{eqn:HeatSemigroupLimit-3}
\end{equation}
\end{trivlist}
\end{proposition}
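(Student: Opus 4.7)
\emph{Plan.} All three estimates are classical, and part (iii) is simply the case $\al=0$, $\beta=1$ of part (ii), so I would concentrate on (i) and (ii) together with the corresponding vanishing limits, and then read off (iii) as an immediate corollary.

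For part (i) on $\mathcal{O}=\Rd$ the direct route is via the Gaussian kernel $K_t(x)=(4\pi t)^{-d/2}\exp\bigl(-|x|^2/(4t)\bigr)$ and the scaling identity
\begin{equation*}
\nabla^m K_t(x) = t^{-(d+m)/2}(\nabla^m K_1)(x/\sqrt t),
\end{equation*}
which gives $|\nabla^m K_t|_{L^s(\Rd)} = Ct^{-m/2 - d(1-1/s)/2}$ since $\nabla^m K_1$ is Schwartz. Choosing the conjugate index $s$ dictated by Young's convolution inequality $1/q+1 = 1/s+1/p$, that is $1-1/s=1/p-1/q=1/r$, and convolving $h$ against $\nabla^m K_t$ yields \eqref{eqn:HeatSemigroupEstimate-1}. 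For $\mathcal{O}=\mathbb{T}^d$ the heat kernel is the periodization of $K_t$, so for $t\in(0,1]$ the short-time bound is identical to the Euclidean one, and for $t\in[1,T]$ uniform smoothing of the heat flow on the compact torus absorbs any bounded correction into $C$.

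For part (ii), I would invoke the fact that $-(I-\triangle)$ generates an analytic semigroup on $L^p(\mathcal{O},\Rd)$ for $p\in(1,\infty)$, and factor
\begin{equation*}
(I-\triangle)^{\beta/2}e^{t\triangle}h = e^{t}\,(I-\triangle)^{(\beta-\al)/2}\,e^{-t(I-\triangle)}\,(I-\triangle)^{\al/2}h.
\end{equation*}
The standard analytic-semigroup estimate $\|(I-\triangle)^{\gamma}e^{-t(I-\triangle)}\|_{L^p\to L^p}\leq Ct^{-\gamma}$ for $\gamma>0$ (see e.g.\ Pazy or Lunardi) then delivers \eqref{eqn:HeatSemigroupEstimate-2} on $(0,T]$, and specializing to $\al=0$, $\beta=1$ produces \eqref{eqn:HeatSemigroupEstimate-3}.

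For the three vanishing limits I would use a density argument: for $h\in C_0^{\infty}(\mathcal{O},\Rd)$ the strong continuity of the heat semigroup in the target norm makes $|e^{t\triangle}h|$ bounded on $[0,T]$, so the factor $t^{\gamma}$ forces the limit to vanish. For general $h$ in the source space, split $h=h_{\eps}+(h-h_{\eps})$ with $h_{\eps}$ smooth and $h-h_{\eps}$ small in the source norm, apply what was just proved to $h_{\eps}$, and dominate the tail uniformly in $t$ by the non-vanishing estimate. The only real point of care, rather than a genuine obstacle, is the toroidal case in (i): one must verify that the periodized Gaussian inherits both the short-time scaling and uniform long-time boundedness, which is handled cleanly by the $t\leq 1$ versus $t\geq 1$ split mentioned above.
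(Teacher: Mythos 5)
Your proposal is correct, but it is worth noting that the paper does not actually prove this proposition: its ``proof'' consists of a one-paragraph citation to the literature (the Euclidean case is declared well known, and the periodic case is deferred to the books of Lunardi and Souplet with the remark that the periodic Laplacian is handled ``by the same method''). What you supply is a self-contained version of the standard argument that underlies those references, and all three of your ingredients are sound: the Gaussian scaling identity plus Young's inequality with $1-1/s=1/p-1/q=1/r$ gives exactly the exponent $-m/2-d/(2r)$ in \eqref{eqn:HeatSemigroupEstimate-1}; the factorization $e^{t\triangle}=e^{t}e^{-t(I-\triangle)}$ combined with the fractional-power estimate $\|(I-\triangle)^{\gamma}e^{-t(I-\triangle)}\|_{L^p\to L^p}\leq Ct^{-\gamma}$ gives \eqref{eqn:HeatSemigroupEstimate-2}, with the harmless factor $e^{t}\leq e^{T}$ absorbed into the constant (which is why the constant must be allowed to depend on $T$, as the statement permits); and the density argument for the vanishing limits is the standard one, using that $C_0^{\infty}$ is dense in the source space (indeed $H^{\al,p}$ is defined in the paper as the closure of $C_0^{\infty}$) and that the uniform-in-$t$ bound controls the tail. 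Your observation that (iii) is the case $\al=0$, $\beta=1$ of (ii) is also correct. Your treatment of the torus via periodization and the $t\leq 1$ versus $t\in[1,T]$ split is the right way to make the authors' ``follows easily by the same method'' precise. The only caveat, shared with the paper's own formulation, is that the limit \eqref{eqn:HeatSemigroupLimit-1} should be understood for a strictly positive exponent $m/2+d/(2r)>0$ (when $m=0$ and $p=q$ the quantity tends to $|h|_{L^p}$, not zero), but this degenerate case is never used.
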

\begin{proof}
The results above are well known in case of $\mathcal O=\mathbb
R^d$. If $\mathcal O=\mathbb T^d$ then the lemma is well known for
the Dirichlet boundary conditions, see for example books by
Lunardi: Analytic semigroups and optimal regularity in parabolic
problems or by Souplet: Superlinear parabolic problems. Analogous
statements for the periodic Laplacian follow easily by the same
method.
\end{proof}
Now we can formulate the theorems:

\begin{theorem}\label{thm:LocalExistence-2}
For all $u_0\in L^p(\mathcal{O},\Rd)$, $f\in
L^{\infty}([0,T],L^{\frac{2p}{3}}(\mathcal{O},\Rd)),p\geq d$ there
exists
$T_0=T_0(\nu,|u_0|_{L^p(\mathcal{O},\Rd)},|f|_{L^{\frac{2p}{3}}(\mathcal{O},\Rd)})>0$
such that there exists unique mild solution $u\in
L^{\infty}(0,T_0;L^p(\mathcal{O},\Rd))$ of Burgers equation.
Furthermore
\begin{trivlist}
\item[(a)]$u:[0,T_0]\to L^p(\mathcal{O},\Rd)$ is continuous and
$u(0)=u_0$. \item[(b)]$u:(0,T_0]\to L^{2p}(\mathcal{O},\Rd)$ is
continuous and $\liml_{t\to
0}t^{\frac{d}{4p}}|u(t)|_{L^{2p}(\mathcal{O},\Rd)}=0$.
\item[(c)]$u:(0,T_0]\to H^{1,p}(\mathcal{O},\Rd)$ is continuous
and $\liml_{t\to
0}t^{\frac{1}{2}}|u(t)|_{H^{1,p}(\mathcal{O},\Rd)}=0$.
\end{trivlist}
\end{theorem}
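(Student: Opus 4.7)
The plan is to derive the theorem as a direct application of the abstract result of Weissler (Theorem~\ref{thm:AbstractExistence}) with the four Banach spaces chosen according to the algebraic structure of the Burgers nonlinearity $F(u,v)=(u\nabla)v$. Since one factor in $F$ is the velocity itself while the other carries one spatial derivative, I would take
$$
X=L^p(\mathcal{O},\Rd),\quad Y=L^{2p}(\mathcal{O},\Rd),\quad Z=H^{1,p}(\mathcal{O},\Rd),\quad W=L^{\frac{2p}{3}}(\mathcal{O},\Rd),
$$
and set $G(u,v)=-F(u,v)$. H\"older's inequality with $\tfrac{1}{2p}+\tfrac{1}{p}=\tfrac{3}{2p}$ shows that $G:Y\times Z\to W$ is a bounded bilinear map, which is compatible with the hypothesis $f\in L^{\infty}(0,T;W)$.

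The three smoothing conditions (a1)--(a3) for the semigroup $R_t=S_t^{\nu}$ are then read off from Proposition~\ref{prop:HeatSemigroup}. For (a1), i.e.\ $W\to X$, part~(i) with $m=0$ and $\tfrac{1}{r}=\tfrac{3}{2p}-\tfrac{1}{p}=\tfrac{1}{2p}$ gives the decay exponent $a=\tfrac{d}{4p}$. For (a2), i.e.\ $X\to Y$, part~(i) with $\tfrac{1}{r}=\tfrac{1}{p}-\tfrac{1}{2p}=\tfrac{1}{2p}$ gives $b=\tfrac{d}{4p}$, together with the continuity and vanishing statements contained in \rf{eqn:HeatSemigroupLimit-1}. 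For (a3), i.e.\ $X\to Z=H^{1,p}$, part~(iii) supplies exponent $c=\tfrac{1}{2}$ along with the limit \rf{eqn:HeatSemigroupLimit-3}.

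The only nontrivial arithmetic condition, and the reason for the hypothesis $p\geq d$, is the constraint $a+b+c\leq 1$ in Theorem~\ref{thm:AbstractExistence}. In our case it reads
$$
\frac{d}{4p}+\frac{d}{4p}+\frac{1}{2}=\frac{d}{2p}+\frac{1}{2}\leq 1,
$$
which is equivalent to $p\geq d$ and is in fact the only place where the dimension enters the argument. Once this is verified, Theorem~\ref{thm:AbstractExistence} directly produces a unique function $u:[0,T_0]\to X$ satisfying $u(t)=S_t^{\nu}u_0+\intl_0^t S_{t-\tau}^{\nu}(G(u(\tau))+f(\tau))d\tau$, which is precisely \rf{eqn:BurgersEquation-2}, together with properties (a)--(c).

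The step that requires some care, rather than being a genuine obstacle, is tracking how the local time of existence depends on the data. Theorem~\ref{thm:AbstractExistence} produces $T_0$ as a function of $|u_0|_X$, $|f|_{L^{\infty}(0,T;W)}$, the norm of $G$ and the constants in (a1)--(a3); the viscosity $\nu$ enters these constants only through the rescaling $S_t^{\nu}=e^{\nu t\triangle}$, and they are otherwise universal on bounded intervals. This yields the stated dependence $T_0=T_0\bigl(\nu,|u_0|_{L^p(\mathcal{O},\Rd)},|f|_{L^{\infty}(0,T;L^{2p/3}(\mathcal{O},\Rd))}\bigr)$ and concludes the proof.
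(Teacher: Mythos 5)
Your proposal is correct and follows essentially the same route as the paper: the same choice of spaces $X=L^p$, $Y=L^{2p}$, $Z=H^{1,p}$, $W=L^{2p/3}$ in Theorem~\ref{thm:AbstractExistence}, the same H\"older bound on the nonlinearity, and the same exponents $a=b=\tfrac{d}{4p}$, $c=\tfrac12$ from Proposition~\ref{prop:HeatSemigroup}. You in fact spell out the arithmetic $a+b+c\leq 1\iff p\geq d$ more explicitly than the paper does, which is a welcome addition rather than a deviation.
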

\begin{proof}[Proof of Theorem \ref{thm:LocalExistence-2}]
We use theorem \eqref{thm:AbstractExistence} with following data
$X=L^p(\mathcal{O},\Rd)$, $Y=L^{2p}(\mathcal{O},\Rd)$,
$Z=H^{1,p}(\mathcal{O},\Rd)$,
$W=L^{\frac{2p}{3}}(\mathcal{O},\Rd)$. Then it follows from
H\"{o}lder inequality that $F:L^{2p}(\mathcal{O},\Rd)\times
H^{1,p}(\mathcal{O},\Rd)\to L^{\frac{2p}{3}}(\mathcal{O},\Rd)$ is
a bounded bilinear map. Conditions \eqref{eqn:a1Cond} is satisfied
with $a=\frac{d}{4p}$ by estimate
\eqref{eqn:HeatSemigroupEstimate-1}. Conditions
\eqref{eqn:a2Cond},\eqref{eqn:a2Cond'} are satisfied with
$b=\frac{d}{4p}$ by \eqref{eqn:HeatSemigroupEstimate-1} and
\eqref{eqn:HeatSemigroupLimit-1}. Conditions
\eqref{eqn:a3Cond},\eqref{eqn:a3Cond'} are satisfied with
$c=\frac{1}{2}$ by \eqref{eqn:HeatSemigroupEstimate-3} and
\eqref{eqn:HeatSemigroupLimit-3}.
\end{proof}
\begin{corollary}\label{cor:ClassicalSolutionReg}
Let $p\geq d$, $\theta\in (0,1)$, $u_0\in L^p(\mathcal{O},\Rd)$,
$f\in L^{\infty}([0,T],L^{\frac{2p}{3}}(\mathcal{O},\Rd)\cap
L^p(\mathcal{O},\Rd))$, $f\in
C^{\theta}([\eps,T],L^p(\mathcal{O},\Rd))$, $\forall \eps>0$.
Then there exist $T_2>0$ such that $u\in
C^1((0,T_2];L^p(\mathcal{O},\Rd))\cap
C((0,T_2];H^{2,p}(\mathcal{O},\Rd))\cap
C^{\theta}([\eps,T_2],H^{2,p}(\mathcal{O},\Rd))\cap
C^{1+\theta}([\eps,T_2],L^p(\mathcal{O},\Rd))$, $\forall \eps>0$
and $u$ satisfies Burgers equation
\begin{equation}
u'=\nu\triangle u-F(u(t))+f(t),\label{eqn:BurgersEquation-11}
\end{equation}
\end{corollary}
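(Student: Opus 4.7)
The plan is to promote the mild solution produced by Theorem \ref{thm:LocalExistence-2} to a classical solution by means of the standard analytic-semigroup regularity theory. On $X=L^p(\mathcal{O},\Rd)$ the operator $\nu\triangle$ generates an analytic semigroup whose domain is $H^{2,p}(\mathcal{O},\Rd)$, so the classical theorem of Pazy (\emph{Semigroups of linear operators}, Thm.~4.3.1) or Lunardi asserts that whenever $g\in C^\theta([\eps,T];L^p(\mathcal{O},\Rd))$ for some $\theta>0$, the Duhamel convolution $\intl_0^t S_{t-s}^\nu g(s)\,ds$ lies in $C^{1+\theta}([\eps,T];L^p(\mathcal{O},\Rd))\cap C^\theta([\eps,T];H^{2,p}(\mathcal{O},\Rd))$. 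Since $S_t^\nu u_0\in C((0,T_0];H^{2,p}(\mathcal{O},\Rd))$ by analyticity, it suffices to show that $F(u)\in C^\theta([\eps,T_2];L^p(\mathcal{O},\Rd))$ for every $\eps>0$ and some $\theta>0$.

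First I would verify that $F(u)$ takes values in $L^p$ away from $t=0$. Theorem \ref{thm:LocalExistence-2} gives $u\in C((0,T_0];L^{2p}(\mathcal{O},\Rd)\cap H^{1,p}(\mathcal{O},\Rd))$. When $p>d$, the Sobolev embedding gives $H^{1,p}\embed L^\infty$, and H\"older's inequality immediately yields $F(u)=(u\cdot\nabla)u\in L^p$. For the borderline case $p=d$ a short bootstrap is needed: pick $t_0\in(0,T_0)$ and reapply Theorem \ref{thm:LocalExistence-2} on $[t_0,T_0]$ with initial datum $u(t_0)\in L^{2p}$ treated in the role of $L^{p'}$ for some $p'\in(d,2p]$; by uniqueness the new mild solution coincides with $u$, hence $u\in C((\eps,T_2];H^{1,p'}(\mathcal{O},\Rd))$ for every $\eps>0$, which forces $F(u)\in C((\eps,T_2];L^p(\mathcal{O},\Rd))$.

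Next I would upgrade this continuity to H\"older continuity in time. The key ingredient is the analyticity estimate
\begin{equation*}
|(S_h^\nu-I)v|_{H^{\al,p}(\mathcal{O},\Rd)}\leq Ch^\gamma|v|_{H^{\al+2\gamma,p}(\mathcal{O},\Rd)},\qquad \gamma\in(0,1],
\end{equation*}
which follows from Proposition \ref{prop:HeatSemigroup}(ii) by a standard interpolation argument. Combined with the smoothing bounds of Proposition \ref{prop:HeatSemigroup} applied to $F(u)\in L^{2p/3}$ on the initial layer $(0,\eps/2]$ and to $F(u)\in L^p$ on $[\eps/2,T_2]$, together with the corresponding bounds for $f$, the mild formula \eqref{eqn:BurgersEquation-2} yields
\begin{equation*}
|u(t+h)-u(t)|_{H^{1,p'}(\mathcal{O},\Rd)}\leq C(\eps)h^\gamma,\qquad t\in[\eps,T_2],\quad t+h\leq T_2,
\end{equation*}
for some small $\gamma>0$. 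Bilinearity of $F$ and the embedding $H^{1,p'}\embed L^\infty$ (available since $p'>d$) then promote this to $F(u)\in C^\gamma([\eps,T_2];L^p(\mathcal{O},\Rd))$, so the total forcing $g=f-F(u)$ belongs to $C^\theta([\eps,T_2];L^p(\mathcal{O},\Rd))$ with $\theta=\min(\gamma,\theta_f)$. The Pazy-Lunardi theorem quoted above then delivers the claimed regularity of $u$, and differentiating the Duhamel formula gives the pointwise identity \eqref{eqn:BurgersEquation-11}.

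The main technical obstacle is the H\"older estimate just sketched. The Duhamel convolution has a potentially singular integrand as $s\to 0^+$, since $|F(u(s))|_{L^{2p/3}(\mathcal{O},\Rd)}$ need not stay bounded there. The argument must therefore split $\intl_0^{t+h}=\intl_0^{\eps/2}+\intl_{\eps/2}^{t+h}$ and use the vanishing rates $\liml_{t\to 0}t^{d/(4p)}|u(t)|_{L^{2p}(\mathcal{O},\Rd)}=\liml_{t\to 0}t^{1/2}|u(t)|_{H^{1,p}(\mathcal{O},\Rd)}=0$ supplied by Theorem \ref{thm:LocalExistence-2} to absorb the singular weights $(t-s)^{-\alpha}$ near $s=0$, combined with the analyticity estimate above on the regular portion of the integral.
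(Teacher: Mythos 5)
Your proposal is correct in outline and shares the paper's overall strategy: show that $g=f-F(u)$ is H\"older continuous with values in $L^p(\mathcal{O},\Rd)$ on every $[\eps,T_2]$ and then invoke maximal H\"older regularity for the analytic semigroup generated by $\nu\triangle$ (the paper cites Lunardi, Theorem 4.3.4, where you cite Pazy/Lunardi). The genuine difference is in the bootstrap that places $F(u)$ in $L^p$. The paper applies the abstract fixed-point Theorem \ref{thm:AbstractExistence} a second time with $X=Y=L^p$, $Z=H^{1,2p}$, $W=L^{2p/3}$, obtaining $u\in C((0,T_1];H^{1,2p}(\mathcal{O},\Rd))$ with $\liml_{t\to 0}t^{1/2}|u(t)|_{H^{1,2p}}=0$, and then uses the symmetric H\"older pairing $|u\nabla u|_{L^p}\le |u|_{L^{2p}}|\nabla u|_{L^{2p}}$; this treats all $p\ge d$ uniformly and needs no Sobolev embedding. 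You instead use $H^{1,p}\embed L^\infty$ for $p>d$ and patch $p=d$ by re-running the local theory at an exponent $p'>d$. That route works, but watch one mismatch in the $p=d$ case on $\Rd$: H\"older continuity of $t\mapsto u(t)$ in $H^{1,p'}$ with $p'>p$ controls $|u(t)-u(s)|_{L^\infty}$ but not $|\nabla u(t)-\nabla u(s)|_{L^p}$ (there is no inclusion $L^{p'}\subset L^p$ on the whole space), and the latter is needed for the term $(u\cdot\nabla)(u(t)-u(s))$ in the bilinear expansion of $F(u(t))-F(u(s))$; you should run your Duhamel/analyticity H\"older estimate in $H^{1,p}$ as well as in $H^{1,p'}$, which costs nothing. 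The paper's choice of measuring both factors in the single exponent $2p$ sidesteps this entirely. Finally, your hands-on estimate via $(S_h^\nu-I)$ and the splitting of the Duhamel integral at $\eps/2$, absorbing the singular layer with the weighted limits from Theorem \ref{thm:LocalExistence-2}, is essentially a self-contained proof of the paper's appeal to Lunardi's Proposition 4.2.3; either is acceptable, the citation being shorter and your version more transparent about where the hypotheses enter.
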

\begin{proof}
By theorem \ref{thm:LocalExistence-2} we have that $u(t)\in
L^{2p}(\mathcal{O},\Rd),t\in (0,T_0]$. 
Let us show that there exist $T_1$ such that $u\in
C((0,T_1],H^{1,2p}(\mathcal{O},\Rd))$ and $\liml_{t\to
0}t^{\frac{1}{2}}|u(t)|_{H^{1,2p}(\mathcal{O},\Rd)}=0$. We apply
Theorem \ref{thm:AbstractExistence} with following data
$X=Y=L^p(\mathcal{O},\Rd)$, $Z=H^{1,2p}(\mathcal{O},\Rd)$,
$W=L^{\frac{2p}{3}}(\mathcal{O},\Rd)$. Then it follows from
H\"{o}lder inequality that $F:L^p(\mathcal{O},\Rd)\times
H^{1,2p}(\mathcal{O},\Rd)\to L^{\frac{2p}{3}}(\mathcal{O},\Rd)$ is
a bounded bilinear map. Conditions \eqref{eqn:a1Cond} is satisfied
with $a=\frac{d}{4p}$ by estimate
\eqref{eqn:HeatSemigroupEstimate-1}. Conditions
\eqref{eqn:a2Cond},\eqref{eqn:a2Cond'} are satisfied with
arbitrary $b>0$ because heat semigroup is analytic on
$L^p(\mathcal{O},\Rd)$. Conditions
\eqref{eqn:a3Cond},\eqref{eqn:a3Cond'} are satisfied with
$c=\frac{1}{2}$ by \eqref{eqn:HeatSemigroupEstimate-3} and
\eqref{eqn:HeatSemigroupLimit-3}.

As the result by part c of the Theorem \ref{thm:AbstractExistence}
we get existence of $T_1$ such that $u\in
C((0,T_1],H^{1,2p}(\mathcal{O},\Rd))$ and $\liml_{t\to
0}t^{\frac{1}{2}}|u(t)|_{H^{1,2p}(\mathcal{O},\Rd)}=0$. Put
$T_2=\min\{T,T_0,T_1\}$.

Therefore, we have
\begin{eqnarray}
|F(u)|_{L^1(0,T_2;L^p(\mathcal{O},\Rd))}\leq\intl_0^{T_2}|u(s)|_{L^{2p}(\mathcal{O},\Rd)}|\nabla
u|_{L^{2p}(\mathcal{O},\Rd)}ds\nonumber\\
\leq\intl_0^{T_2}\frac{1}{s^{\frac{d}{4p}+\frac{1}{2}}}\supl_s(s^{\frac{d}{4p}}|u(s)|_{L^{2p}(\mathcal{O},\Rd)})\supl_s(s^{\frac{1}{2}}|u(s)|_{H^{1,2p}(\mathcal{O},\Rd)})ds\nonumber\\
\leq\supl_s(s^{\frac{d}{4p}}|u(s)|_{L^{2p}(\mathcal{O},\Rd)})\supl_s(s^{\frac{1}{2}}|u(s)|_{H^{1,2p}(\mathcal{O},\Rd)}){T_2}^{\frac{1}{2}-\frac{d}{4p}}<\infty\label{eqn:aux-a}
\end{eqnarray}
Let us show that $F(u(\cdot)):[\eps,T_2]\to L^p(\mathcal{O},\Rd)$
is a H\"{o}lder continuous for any $\eps>0$. Then the result will
follow from theorem 4.3.4, p.137 in \cite{Lunardi},
\eqref{eqn:aux-a} and assumption $f\in
L^1([0,T];L^p(\mathcal{O},\Rd))\cap
C^{\theta}([\eps,T],L^p(\mathcal{O},\Rd)),$ $\forall \eps>0$.
Since $F:H^{1,2p}(\mathcal{O},\Rd)\to L^p(\mathcal{O},\Rd)$ is
locally Lipschitz it is easy to notice that it is enough to prove
that $u:[\eps,T_2]\to H^{1,2p}(\mathcal{O},\Rd)$ is a H\"{o}lder
continuous for any $\eps>0$. Since we have representation
\begin{equation}
u(t)=S_{t-\eps}^{\nu}u(\eps)-\intl_{\eps}^tS_{t-s}^{\nu}(F(u(s))-f(s))ds,t\in
[\eps,T_2].\label{eqn:BurgersEquation-3}
\end{equation}
for $u$ it is enough to show that each term of this representation
is H\"{o}lder continuous. Similarly to \eqref{eqn:aux-a} we have
\begin{equation}
\supl_{t\in[0,T_2]}t^{\frac{1}{2}+\frac{d}{4p}}|F(u(t))|_{L^p(\mathcal{O},\Rd)}\leq
\supl_ss^{\frac{d}{4p}}|u(s)|_{L^{2p}(\mathcal{O},\Rd)}\supl_ss^{\frac{1}{2}}|u(s)|_{H^{1,2p}(\mathcal{O},\Rd)}<\infty
\end{equation}
and it follows by proposition 4.2.3 part (i), p.130 of
\cite{Lunardi} that $\intl_0^tS_{t-s}^{\nu}F(u(s))ds\in
C^{\frac{1}{2}-\frac{d}{4p}}(0,T_2;L^p(\mathcal{O},\Rd))$.
Similarly, we have that $\intl_{\eps}^tS_{t-s}^{\nu}f(s)ds\in
C^{\theta}(0,T_2;L^p(\mathcal{O},\Rd))$ and the result follows.
\end{proof}
\begin{corollary}\label{cor:ClassicalSolutionReg-2}
Suppose that assumptions of the corollary
\eqref{cor:ClassicalSolutionReg} are satisfied. Assume also that
$f\in C^{\theta}([\eps,T],H^{k,p}(\mathcal{O},\Rd))$, $\forall
\eps>0$ for some $k\in \Nat$. Then $u\in
C^{\theta}([\eps,T],H^{k+2,p}(\mathcal{O},\Rd))\cap
C^{1+\theta}([\eps,T],H^{k,p}(\mathcal{O},\Rd))$, $\forall
\eps>0$.
\end{corollary}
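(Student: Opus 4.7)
The plan is to argue by induction on $k$. The base case $k=0$ is exactly Corollary \ref{cor:ClassicalSolutionReg}, since $H^{0,p} = L^p$. For the inductive step, assume the conclusion is known for $k-1$; in particular we have $u \in C^\theta([\eps,T], H^{k+1,p}(\mathcal{O},\Rd))$ for every $\eps > 0$. Note that since $H^{k,p} \hookrightarrow H^{j,p}$ for every $j \leq k$, the assumption on $f$ at level $k$ furnishes the assumption at all lower levels, so the induction proceeds without extra hypotheses.

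The strategy is to repeat the proof of Corollary \ref{cor:ClassicalSolutionReg} verbatim, but on the Banach space $H^{k,p}$ in place of $L^p$. Fix $0 < \eps' < \eps$ and use the translated mild representation
\[
u(t) = S^\nu_{t-\eps'} u(\eps') - \int_{\eps'}^t S^\nu_{t-s}\bigl(F(u(s)) - f(s)\bigr)\, ds, \qquad t \in [\eps', T_2].
\]
The first term is smooth in $t$ on $[\eps, T_2]$ with values in every $H^{m,p}$, since $t - \eps' \geq \eps - \eps' > 0$ and $S^\nu$ is analytic. For the convolution integral one invokes the maximal regularity theorem for analytic semigroups (Theorem 4.3.4 in \cite{Lunardi}) on $H^{k,p}$, which requires the inhomogeneity $f - F(u)$ to lie in $C^\theta([\eps', T_2], H^{k,p})$. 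The $f$-part is given by hypothesis, so the real task is the nonlinear estimate
\[
F(u) = (u \cdot \nabla) u \in C^\theta([\eps', T_2], H^{k,p}(\mathcal{O}, \Rd)).
\]

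By the inductive hypothesis $u \in C^\theta([\eps'/2, T_2], H^{k+1,p})$, so both $u$ and $\nabla u$ are $\theta$-H\"older continuous in time with values in $H^{k,p}$. A Sobolev-type bilinear bound of the form
\[
\|vw\|_{H^{k,p}} \leq C\bigl(\|v\|_{H^{k+1,p}}\|w\|_{H^{k,p}} + \|v\|_{H^{k,p}}\|w\|_{H^{k+1,p}}\bigr)
\]
then transfers the H\"older regularity from the factors to the product by telescoping the difference $F(u(t)) - F(u(s))$, yielding the required estimate. The main technical point is this product estimate. For $p > d$ it is immediate from $H^{1,p} \hookrightarrow L^\infty$ (so that $H^{k,p}$ is a Banach algebra for $k \geq 1$); in the borderline case $p = d$ one argues by hand using $H^{k+1,d} \hookrightarrow L^\infty$ together with $H^{k,d} \hookrightarrow L^q$ for every finite $q$, distributing derivatives and applying H\"older with suitable exponents. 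Once the product estimate is secured, the conclusion $u \in C^\theta([\eps,T], H^{k+2,p}) \cap C^{1+\theta}([\eps,T], H^{k,p})$ follows from Lunardi's theorem exactly as in Corollary \ref{cor:ClassicalSolutionReg}, closing the induction.
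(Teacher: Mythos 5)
Your overall strategy --- induct on $k$, reduce everything to showing $F(u)\in C^{\theta}([\eps,T],H^{k,p}(\mathcal{O},\Rd))$, and then invoke Lunardi's maximal H\"older regularity --- is sound and has the same skeleton as the paper's argument (the paper writes out only $k=1$ and declares the general case ``similar'', which is implicitly your induction). The genuine difference is the mechanism for the product estimate. You control $(u\nabla)u$ in $H^{k,p}$ via Sobolev multiplication: the Banach algebra property of $H^{k,p}$ for $p>d$, $k\geq1$, plus a hand argument at $p=d$. The paper instead exploits the smoothing already obtained in Theorem \ref{thm:LocalExistence-2}: since $u(t)\in L^{2p}$ for $t>0$, it reruns the proof of Corollary \ref{cor:ClassicalSolutionReg} at exponent $2p$ to get $u\in C^{\theta}([\eps,T],H^{2,2p})\cap C^{1+\theta}([\eps,T],L^{2p})$, and then plain H\"older ($L^{2p}\cdot L^{2p}\subset L^{p}$) bounds $F(u)$ and $\nabla F(u)$ in $C^{\theta}([\eps,T],L^{p})$ with no Sobolev multiplication theorem at all; in particular the borderline case $p=d$ (allowed by Corollary \ref{cor:ClassicalSolutionReg}) comes for free, whereas your route has to treat it separately. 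Your version stays at the fixed exponent $p$ and makes the induction explicit, which is a genuine gain in clarity. One caveat: the bilinear bound you display, $\|vw\|_{H^{k,p}}\leq C(\|v\|_{H^{k+1,p}}\|w\|_{H^{k,p}}+\|v\|_{H^{k,p}}\|w\|_{H^{k+1,p}})$, cannot be applied literally with $v=u$, $w=\nabla u$: its second term is $\|u\|_{H^{k,p}}\|\nabla u\|_{H^{k+1,p}}$, which is controlled by $\|u\|_{H^{k+2,p}}$ --- precisely the quantity you do not yet have. What your Leibniz-plus-H\"older argument actually delivers, and what suffices, is the asymmetric bound $\|v\,\partial_{j}w\|_{H^{k,p}}\leq C\|v\|_{H^{k+1,p}}\|w\|_{H^{k+1,p}}$ for $p\geq d$, $k\geq1$ (for $p>d$ this is just the algebra bound $\|vw\|_{H^{k,p}}\leq C\|v\|_{H^{k,p}}\|w\|_{H^{k,p}}$ applied to $u$ and $\nabla u$). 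With that correction the telescoping of $F(u(t))-F(u(s))$ and the application of Lunardi's theorem close the induction as you describe.
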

\begin{proof}
We will show the result for $k=1$. General case follows similarly.
We have that $u(t)\in L^{2p}(\mathcal{O},\Rd),t>0$. As a result,
following the proof of the previous corollary we can get that
\begin{equation}
u\in C^{\theta}([\eps,T],H^{2,2p}(\mathcal{O},\Rd))\cap
C^{1+\theta}([\eps,T],L^{2p}(\mathcal{O},\Rd)),\forall
\eps>0.\label{eqn:ClassicalSolutionReg-3}
\end{equation}
Therefore, we have following estimates for nonlinearity
\begin{eqnarray}
|F(u)|_{C^{\theta}([\eps,T],L^p(\mathcal{O},\Rd))}\leq
|u|_{L^{\infty}(\eps,T;L^{2p}(\mathcal{O},\Rd))}|\nabla
u|_{C^{\theta}([\eps,T],L^{2p}(\mathcal{O},\Rd))}\nonumber\\
+|\nabla
u|_{L^{\infty}(\eps,T;L^{2p}(\mathcal{O},\Rd))}|u|_{C^{\theta}([\eps,T],L^{2p}(\mathcal{O},\Rd))}<\infty\label{eqn:ClassicalSolutionReg-4}
\end{eqnarray}
where we have used \eqref{eqn:ClassicalSolutionReg-3}.
Furthermore,
\begin{eqnarray}
|\nabla F(u)|_{C^{\theta}([\eps,T],L^p(\mathcal{O},\Rd))}&\leq& C
|\nabla u|_{L^{\infty}(\eps,T;L^{2p}(\mathcal{O},\Rd))}|\nabla
u|_{C^{\theta}([\eps,T],L^{2p}(\mathcal{O},\Rd))}\nonumber\\
&+&|u|_{L^{\infty}(\eps,T;L^{2p}(\mathcal{O},\Rd))}|\triangle
u|_{C^{\theta}([\eps,T],L^{2p}(\mathcal{O},\Rd))}\label{eqn:ClassicalSolutionReg-5}\\
&+&|\triangle
u|_{L^{\infty}(\eps,T;L^{2p}(\mathcal{O},\Rd))}|u|_{C^{\theta}([\eps,T],L^{2p}(\mathcal{O},\Rd))}<\infty,\nonumber
\end{eqnarray}
where we have used \eqref{eqn:ClassicalSolutionReg-3}. Thus,
combining \eqref{eqn:ClassicalSolutionReg-4} and
\eqref{eqn:ClassicalSolutionReg-5} we get $F(u)\in
C^{\theta}([\eps,T],H^{1,p})$, $\forall \eps>0$. In the same time,
by assumption we have that $f\in
C^{\theta}([\eps,T],H^{1,p}(\mathcal{O},\Rd))$, $\forall \eps>0$.
Therefore by maximal regularity result, theorem 4.3.1, p.134 of
\cite{Lunardi}, it follows that $u\in
C^{\theta}([\eps,T],H^{3,p}(\mathcal{O},\Rd))\cap
C^{1+\theta}([\eps,T],H^{1,p}(\mathcal{O},\Rd))$.
\end{proof}
In the next lemma we will show that either local solution defined
in previous theorems is global or it blows up. Let us denote
$T_{max}$ maximal existence time of solution.
\begin{lemma}\label{lem:LocalBlowUpBehaviour}
Assume that $u_0\in L^p(\mathcal{O},\Rd)$, $f\in
L^{\infty}([0,T],L^{\frac{2p}{3}}(\mathcal{O},\Rd)\cap
L^p(\mathcal{O},\Rd))$, $p>d$ and $T_{max}<\infty$. Let $u\in
L^{\infty}([0,T_{max});L^p(\mathcal{O},\Rd))$ be maximal local
mild solution of Burgers equation \eqref{eqn:BurgersEquation-2}.
Then
\begin{equation}
\limsupl_{t\nearrow
T_{max}}|u(t)|_{L^p(\mathcal{O},\Rd)}^2=\infty.\label{eqn:LocalBlowUpBehaviour-1}
\end{equation}
\end{lemma}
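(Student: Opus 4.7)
The plan is to argue by contradiction and use the fact that the local existence time in Theorem \ref{thm:LocalExistence-2} depends on the initial data only through the scalar $|u_0|_{L^p(\mathcal{O},\Rd)}$ and the norm of the force. Suppose that, contrary to \eqref{eqn:LocalBlowUpBehaviour-1}, there exists $M<\infty$ with
\[
\limsupl_{t\nearrow T_{max}}|u(t)|_{L^p(\mathcal{O},\Rd)}\leq M.
\]
Combined with the continuity $u\in C([0,T_{max});L^p(\mathcal{O},\Rd))$ (which follows from part (a) of Theorem \ref{thm:LocalExistence-2} by gluing the local pieces), this yields an even uniform bound $\sup_{t\in[0,T_{max})}|u(t)|_{L^p(\mathcal{O},\Rd)}\leq M'$ for some $M'<\infty$.

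Next I would invoke Theorem \ref{thm:LocalExistence-2} to produce a \emph{uniform} local existence time
\[
\delta\den T_0\bigl(\nu,\,M'+1,\,|f|_{L^{\infty}(0,T;L^{2p/3}(\mathcal{O},\Rd))}\bigr)>0,
\]
valid for any initial datum of $L^p$-norm at most $M'+1$. Pick $\tau\in(T_{max}-\delta/2,T_{max})$; then $|u(\tau)|_{L^p(\mathcal{O},\Rd)}\leq M'\leq M'+1$, so Theorem \ref{thm:LocalExistence-2} with initial value $u(\tau)$ furnishes a mild solution $\tilde u$ of \eqref{eqn:BurgersEquation-2} on $[\tau,\tau+\delta]$, and by construction $\tau+\delta>T_{max}$.

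It remains to paste $\tilde u$ onto $u$. On $[\tau,T_{max})$ both $u$ (restricted) and $\tilde u$ are mild solutions in the class $L^{\infty}(\tau,T_{max}-\eps;L^p(\mathcal{O},\Rd))$ with the same initial condition $u(\tau)$, so the uniqueness part of Theorem \ref{thm:LocalExistence-2} forces $\tilde u\equiv u$ on $[\tau,T_{max})$. Defining
\[
\bar u(t)\den\begin{cases}u(t),&t\in[0,\tau],\\ \tilde u(t),&t\in[\tau,\tau+\delta],\end{cases}
\]
yields a mild solution on $[0,\tau+\delta]$ with $\tau+\delta>T_{max}$, contradicting the definition of $T_{max}$ as the maximal existence time. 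Hence $\limsup_{t\nearrow T_{max}}|u(t)|_{L^p(\mathcal{O},\Rd)}=\infty$, which is exactly \eqref{eqn:LocalBlowUpBehaviour-1}.

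The only nontrivial point is verifying that the local existence time $\delta$ really depends on the initial datum only via $|u(\tau)|_{L^p(\mathcal{O},\Rd)}$ (so that it stays bounded away from $0$ uniformly in $\tau\nearrow T_{max}$) and that the two mild solutions match on their common interval; both are guaranteed by Theorem \ref{thm:LocalExistence-2}, so the argument is essentially a standard continuation-by-contradiction.
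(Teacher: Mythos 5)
Your proof is correct in outline, but it takes a genuinely different route from the paper's. You run the standard ``uniform local existence time'' continuation argument: a uniform $L^p$ bound up to $T_{max}$ plus the fact that the existence time $T_0$ in Theorem \ref{thm:LocalExistence-2} depends on the datum only through $|u_0|_{L^p(\mathcal{O},\Rd)}$ lets you restart at some $\tau$ close to $T_{max}$ and overshoot it. The paper instead shows that the bounded solution is H\"older continuous in $L^p$ up to $T_{max}$ and hence converges to a limit $y\in L^p$, which contradicts maximality; to get the H\"older bound it first proves a uniform $H^{1,p}$ estimate near $T_{max}$ via the mild formulation and a singular Gronwall inequality, and this is where the hypothesis $p>d$ enters (the kernel exponent $\tfrac12+\tfrac{d}{2p}$ must be $<1$). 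The trade-off: your argument is shorter and avoids the $H^{1,p}$ machinery, but it leans entirely on the claim that $T_0$ depends only on $|u_0|_{L^p}$ and $|f|$ --- a claim you correctly flag as the crux. That claim is true here precisely because $p>d$ puts you in the subcritical regime $a+b+c<1$ of Theorem \ref{thm:AbstractExistence} (so the contraction and invariance constants can be beaten by shrinking $T$ as a function of the norm alone); at the critical exponent $p=d$ the existence time depends on the profile of $u_0$ through the decay rate in \eqref{eqn:HeatSemigroupLimit-1}, not just its norm, and your argument would break down --- which is consistent with the paper's remark that the case $p=d$ is open. If you keep your version, you should make the dependence of $T_0$ on the norm explicit by inspecting the appendix proof rather than just citing the statement, and also say a word about why the glued function $\bar u$ is again a mild solution (semigroup splitting of the Duhamel integral); both are routine but are the only places where the argument could be challenged.
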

\begin{proof}[Proof of Lemma \ref{lem:LocalBlowUpBehaviour}]
We will argue by contradiction. Assume that
\begin{equation}
\limsupl_{t\nearrow
T_{max}}|u(t)|_{L^p(\mathcal{O},\Rd)}^2<\infty.\label{eqn:LocalBlowUpBehaviour-2}
\end{equation}
Then there exist $T_1$ such that
\begin{equation}
K_1=\supl_{t\in
[T_1,T_{max})}|u(t)|_{L^p(\mathcal{O},\Rd)}<\infty.\label{eqn:LocalLpBound}
\end{equation}
We will show that there exist $C,\al>0$ such that
\begin{equation}
|u(t)-u(\tau)|_{L^p(\mathcal{O},\Rd)}\leq
C|t-\tau|^{\al},t,\tau\in [T_2,T_{max}),T_1\leq
T_2<T_{max}.\label{eqn:UniformContinuity-1}
\end{equation}
Then it follows from \eqref{eqn:LocalBlowUpBehaviour-2} and
\eqref{eqn:UniformContinuity-1} 
that there exist $y\in L^p$ such that
\begin{equation}
\liml_{t\nearrow T_{max}}
|u(t)-y|_{L^p(\mathcal{O},\Rd)}=0,\label{eqn:LocalBlowUpBehaviour-3}
\end{equation}
and we have a contradiction with definition of $T_{max}$. Thus, we
need to show \eqref{eqn:UniformContinuity-1}. Let us show first
that there exist $T_3<T_{max}$ such that
\begin{equation}
K_2=\supl_{t\in
[T_3,T_{max})}|u(t)|_{H^{1,p}(\mathcal{O},\Rd)}<\infty.\label{eqn:LocalH1pBound}
\end{equation}
It is enough to show
\begin{equation}
\supl_{t\in [T_3,T_{max})}|\nabla
u(t)|_{L^{p}(\mathcal{O},\Rd)}<\infty,\label{eqn:LocalH1pBound-2}
\end{equation}
for some $T_1\leq T_3<T_{max}$. Indeed, \eqref{eqn:LocalH1pBound}
immediately follows from \eqref{eqn:LocalLpBound} and
\eqref{eqn:LocalH1pBound-2}. We have
\begin{equation}
\nabla u(t)=\nabla S_{t}^{\nu}u_0-\intl_0^t\nabla
S_{t-s}^{\nu}(F(u(s))-f(s))ds.
\end{equation}
Hence,
\begin{eqnarray}
|\nabla u(t)|_{L^{p}(\mathcal{O},\Rd)}&\leq& |\nabla
S_{t}^{\nu}u_0|_{L^p(\mathcal{O},\Rd)}\nonumber\\
&+&\intl_0^t|\nabla
S_{t-s}^{\nu}f(s)|_{L^p(\mathcal{O},\Rd)}ds+\intl_0^t|\nabla
S_{t-s}^{\nu}F(u(s))|_{L^p(\mathcal{O},\Rd)}ds\nonumber\\
&\leq&
\frac{C|u_0|_{L^p(\mathcal{O},\Rd)}}{t^{1/2}}+\intl_0^t\frac{|f(s)|_{L^p(\mathcal{O},\Rd)}}{|t-s|^{1/2}}ds\nonumber\\
&+&C\intl_0^t\frac{|S_{(t-s)/2}^{\nu}F(u(s))|_{L^p(\mathcal{O},\Rd)}}{|t-s|^{1/2}}ds\nonumber\\
&\leq&
\frac{C|u_0|_{L^p(\mathcal{O},\Rd)}}{t^{1/2}}+2\sqrt{t}\supl_{s\in[0,t]}|f(s)|_{L^p(\mathcal{O},\Rd)}\nonumber\\
&+& C\intl_0^t\frac{|F(u(s))|_{L^{p/2}(\mathcal{O},\Rd)}}{|t-s|^{1/2+d/(2p)}}ds\nonumber\\
&\leq&
\frac{C|u_0|_{L^p(\mathcal{O},\Rd)}}{t^{1/2}}+2\sqrt{t}\supl_{s\in[0,t]}|f(s)|_{L^p(\mathcal{O},\Rd)}\nonumber\\
&+& C\intl_0^t
\frac{|u(t)|_{L^p(\mathcal{O},\Rd)}}{|t-s|^{1/2+d/(2p)}}|\nabla
u(t)|_{L^p(\mathcal{O},\Rd)}ds\nonumber\\
&\leq &
\frac{C|u_0|_{L^p(\mathcal{O},\Rd)}}{t^{1/2}}+2\sqrt{t}\supl_{s\in[0,t]}|f(s)|_{L^p(\mathcal{O},\Rd)}\nonumber\\
&+& CK\intl_0^t\frac{|\nabla
u(t)|_{L^p(\mathcal{O},\Rd)}}{|t-s|^{1/2+d/(2p)}}ds,
\end{eqnarray}
where second and third inequalities follow from
\eqref{eqn:HeatSemigroupEstimate-1}, forth inequality follows from
H\"older inequality and assumption \eqref{eqn:LocalLpBound} is
used in the fifth one. Now if $\frac{1}{2}+\frac{d}{2p}<1$ (i.e.
if $p>d$) we can use Gronwall inequality (\cite{Henry-1981}, Lemma
7.1.1, p. 188) to conclude that the estimate
\eqref{eqn:LocalH1pBound-2} holds. Thus we get an estimate
\eqref{eqn:LocalH1pBound}.

Now we can turn to the proof of \eqref{eqn:UniformContinuity-1}.
We have
\begin{equation}
u(t)-u(\tau)=S_{t-\tau}^{\nu}u(\tau)-u(\tau)+\intl_{\tau}^tS_{t-s}^{\nu}(f(s)-F(u(s)))ds.
\end{equation}
Then
\begin{eqnarray}
|u(t)-u(\tau)|_{L^p(\mathcal{O},\Rd)}\leq
|S_{t-\tau}^{\nu}u(\tau)-u(\tau)|_{L^p(\mathcal{O},\Rd)}+|\intl_{\tau}^tS_{t-s}^{\nu}f(s)ds|_{L^p(\mathcal{O},\Rd)}\nonumber\\
+|\intl_{\tau}^tS_{t-s}^{\nu}F(u(s))ds|_{L^p(\mathcal{O},\Rd)}=(I)+(II)+(III).
\end{eqnarray}
First term can be estimated as follows
\begin{eqnarray}
(I)=|\intl_{\tau}^t\nu\triangle
S_s^{\nu}u(s)ds|_{L^p(\mathcal{O},\Rd)}\leq\nu\intl_{\tau}^t|\nabla
S_s^{\nu}(\nabla
u(s))|_{L^p(\mathcal{O},\Rd)}ds\nonumber\\
\leq\nu\intl_{\tau}^t\frac{|\nabla
u(s)|_{L^p(\mathcal{O},\Rd)}}{s^{1/2}}ds\leq
K_2t^{1/2}|t-\tau|.\label{eqn:term1Est}
\end{eqnarray}
For the second term we have
\begin{equation}
(II)\leq
\supl_{s\in[\tau,t]}|f(s)|_{L^p(\mathcal{O},\Rd)}|t-\tau|.\label{eqn:term2Est}
\end{equation}
Third term is estimated as follows
\begin{eqnarray}
(III)\leq\intl_{\tau}^t\frac{|F(u(s))|_{L^{p/2}(\mathcal{O},\Rd)}}{|t-s|^{\frac{d}{2p}}}ds\leq
\intl_{\tau}^t\frac{|u(t)|_{L^p(\mathcal{O},\Rd)}|\nabla
u(t)|_{L^p(\mathcal{O},\Rd)}}{|t-s|^{\frac{d}{2p}}}ds\nonumber\\
\leq CK_2^2|t-\tau|^{1-\frac{d}{2p}},\label{eqn:term3Est}
\end{eqnarray}
where first inequality follows from
\eqref{eqn:HeatSemigroupEstimate-1}, second one follows from
H\"older inequality and the last inequality follows from estimate
\eqref{eqn:LocalH1pBound}.

Combining \eqref{eqn:term1Est}, \eqref{eqn:term2Est} and
\eqref{eqn:term3Est} we get \eqref{eqn:UniformContinuity-1}.
\end{proof}
\begin{remark}
Authors believe that the Lemma \ref{lem:LocalBlowUpBehaviour}
holds also for the critical case of $p=d$. It would be interesting
to prove this fact.
\end{remark}

\section{Global existence of solution on the torus $\mathbb T^d$}
In this section we establish main results of the article. First,
we will show that there exist global solution of Burgers equation
on torus.
\begin{theorem}\label{thm:GlobalExistenceTorus}
Fix $p>d$. Let $\theta\in (0,1)$, $u_0\in L^p(\mathbb T^d,\Rd)$,
$f\in L^{\infty}([0,T],L^{\frac{2p}{3}}(\mathbb T^d,\Rd)\cap
L^p(\mathbb T^d,\Rd))$, $f\in L^1([0,T];L^{\infty}(\mathbb
T^d,\Rd))$, $f\in C^{\theta}([\eps,T],L^p(\mathbb T^d,\Rd))$,
$\forall \eps>0$. Then there exist global solution $u\in
C([0,T],L^p(\mathbb T^d,\Rd))\cap C^1((0,T];L^p(\mathbb
T^d,\Rd))\cap C((0,T];H^{2,p}(\mathbb T^d,\Rd))\cap
C^{\theta}([\eps,T],H^{2,p}(\mathbb T^d,\Rd))\cap
C^{1+\theta}([\eps,T],L^p(\mathbb T^d,\Rd))$, $\forall \eps>0$.
which satisfies Burgers equation \eqref{eqn:BurgersEquation-11}.
\end{theorem}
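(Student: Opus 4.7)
The plan is to combine the local theory of \S 1 with an a priori $L^{\infty}$ bound coming from a Feynman--Kac representation of $u$, and then apply the blow-up criterion of Lemma \ref{lem:LocalBlowUpBehaviour}. By Theorem \ref{thm:LocalExistence-2} the equation admits a unique maximal mild solution $u$ on some interval $[0,T_{max})$, and by Corollary \ref{cor:ClassicalSolutionReg} it is classical on every $[\eps,T']\subset(0,T_{max})$. Since $p>d$, the Sobolev embedding $H^{2,p}(\mathbb T^d)\embed L^{\infty}(\mathbb T^d)$ gives $u(s)\in L^{\infty}$ for every $s>0$, and on the torus $\|u(t)\|_{L^p}\le|\mathbb T^d|^{1/p}\|u(t)\|_{L^{\infty}}$; so it suffices to control $\|u(t)\|_{L^{\infty}}$ uniformly on $[s_0,T_{max})$ for some fixed $s_0>0$.

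Fix $T'<T_{max}$, $s_0\in(0,T')$ and $(t,x)\in[s_0,T']\times\mathbb T^d$, and consider the It\^o diffusion
\[
dX_r=-u(t-r,X_r)\,dr+\sqrt{2\nu}\,dW_r,\quad X_0=x,\quad r\in[0,t-s_0].
\]
This SDE admits a unique strong solution, since $u\in C([s_0,T'];H^{2,p})\embed C([s_0,T'];C^{1,\al})$ makes the drift bounded and spatially Lipschitz. Applying It\^o's formula to the time-reversed field $v^i(r,y):=u^i(t-r,y)$ along $X_r$ and substituting $\partial_tu=\nu\triangle u-(u\cdot\nabla)u+f$, the drift in the semimartingale decomposition of $v^i(r,X_r)$ collapses exactly to $-f^i(t-r,X_r)\,dr$. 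Taking expectation and changing variable $\sigma=t-r$ yields the Feynman--Kac representation
\[
u^i(t,x)=E\bigl[u^i(s_0,X_{t-s_0})\bigr]+E\!\intl_{s_0}^{t}f^i(\sigma,X_{t-\sigma})\,d\sigma,
\]
from which the pointwise bound
\[
\|u(t)\|_{L^{\infty}(\mathbb T^d,\Rd)}\le\|u(s_0)\|_{L^{\infty}(\mathbb T^d,\Rd)}+\intl_{s_0}^{t}\|f(\sigma)\|_{L^{\infty}(\mathbb T^d,\Rd)}\,d\sigma
\]
is immediate.

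Since $f\in L^1([0,T];L^{\infty})$ and $\|u(s_0)\|_{L^{\infty}}<\infty$, the right-hand side is bounded by a constant independent of $T'<T_{max}$. Hence $\|u(t)\|_{L^p}$ remains bounded as $t\nearrow T_{max}$; combined with continuity on $[0,s_0]$ this gives $\limsupl_{t\nearrow T_{max}}\|u(t)\|_{L^p}^2<\infty$, which by Lemma \ref{lem:LocalBlowUpBehaviour} forces $T_{max}\ge T$. The claimed time regularity of $u$ on $[0,T]$ then follows by reapplying Corollary \ref{cor:ClassicalSolutionReg} on each interval $[\eps,T]$.

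The main obstacle is the rigorous justification of the It\^o/Feynman--Kac step: one needs the Burgers equation to hold classically on $[s_0,T']$ (provided by Corollary \ref{cor:ClassicalSolutionReg}), the SDE to admit a strong solution (guaranteed by the Lipschitz drift obtained from $H^{2,p}\embed C^{1,\al}$ when $p>d$), and the stochastic integral $\intl_0^{t-s_0}\nabla u(t-r,X_r)\cdot dW_r$ to be a genuine martingale (clear since $\nabla u$ is bounded on $[s_0,T']$). The key structural point is that the nonlinear transport term in Burgers, viewed as transport by the solution itself, is precisely cancelled by the drift of the auxiliary backward diffusion, so Feynman--Kac turns the quasilinear equation into a linear transport--diffusion equation for which the maximum principle absorbs the nonlinearity.
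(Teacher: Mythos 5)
Your overall strategy is exactly the paper's: reduce to an a priori $L^{\infty}$ bound via a Feynman--Kac representation along the backward diffusion $dX_r=-u(t-r,X_r)\,dr+\sqrt{2\nu}\,dW_r$, observe that the transport term cancels the drift so that only $f$ survives, use compactness of the torus to pass from $L^{\infty}$ to $L^p$, and invoke Lemma \ref{lem:LocalBlowUpBehaviour} to rule out blow-up. The representation you arrive at is the paper's \eqref{eqn:FeynKacApprox-4} and the final estimate is \eqref{eqn:FeynKacEst-1}--\eqref{eqn:FeynKacEst-2}.

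However, there is a genuine gap at the step you yourself flag as ``the main obstacle'': the application of It\^o's formula to $v(r,y)=u(t-r,y)$. The classical It\^o formula requires $v$ to be $C^{1,2}$, i.e.\ twice \emph{continuously} differentiable in space. Corollary \ref{cor:ClassicalSolutionReg} only gives $u\in C((0,T_2];H^{2,p})\cap C^1((0,T_2];L^p)$, and for $p>d$ the Sobolev embedding yields $H^{2,p}(\mathbb T^d)\embed C^{1,1-d/p}$, not $C^2$: the second spatial derivatives of $u(t,\cdot)$ are only $L^p$ functions, and the Burgers equation holds as an identity in $L^p$, not pointwise classically. Under the hypotheses of the theorem ($f\in C^{\theta}([\eps,T],L^p)$ only) you cannot bootstrap further via Corollary \ref{cor:ClassicalSolutionReg-2}, so the direct It\^o step is not justified as written. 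This is precisely why the paper does not apply It\^o's formula to $u$ itself: it mollifies $u$ to obtain $u_{\eps}\in C^1([\delta,T],C^2)$, applies It\^o to $u_{\eps}$ (equation \eqref{eqn:FeynKacApprox-1}), and then spends the bulk of the proof passing to the limit $\eps\to 0$ in every term of \eqref{eqn:FeynKacApprox-2}, using a Girsanov-type change of measure (the martingale $M_t^{\nu}$) to convert expectations over the law of $X$ into heat-semigroup expressions $S_s^{\nu}[\,\cdot\,]$ that can be controlled by the $H^{2,2p}$ convergence of $u_{\eps}\to u$. That limiting argument (or an appeal to an It\^o--Krylov type formula for Sobolev drifts, with justification) is the missing content; your proposal asserts the conclusion of this step but does not supply it, and the justification you offer for it is not sufficient.
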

\begin{proof}[Proof of Proposition \ref{thm:GlobalExistenceTorus}]
We have according to the Corollary \ref{cor:ClassicalSolutionReg}
that there exist local solution on interval $[0,T_{max})$.
Furthermore, we have by Lemma \ref{lem:LocalBlowUpBehaviour}
blow-up of the solution when $t\to T_{max}$. Thus it is enough to
prove  $L^p$ estimate uniform on semiinterval $[T_0,T_{max})$ for
some $T_0<T_{max}$. Fix $0<\delta<T<T_{max}$. By Corollary
\ref{cor:ClassicalSolutionReg} we can assume that $u\in
C([\eps,T],H^{2,2p}(\mathbb T^d,\Rd))\cap
C^1([\eps,T],L^{2p}(\mathbb T^d,\Rd))$ $\forall \eps>0$. Define
flow
\begin{eqnarray}
dX_t(x) &=& -u(T-t,X_t(x))dt+\sqrt{2\nu}dW_t\nonumber\\
X_0(x) &=& x, x\in \mathbb T^d,0\leq t\leq
T-\delta\label{eqn:FlowDef-1}
\end{eqnarray}
Notice that $u\in C([\delta,T],H^{2,p}(\mathbb T^d,\Rd))\subset
C([\delta,T],C_b^{2-d/p}(\mathbb T^d,\Rd))$ and, therefore, the
flow is correctly defined and does not blow up. Now we will deduce
Feynman-Kac type representation for solution of Burgers equation.
Let $\{u_{\eps}\}_{\eps>0}\in C^1([\delta,T],C^2(\mathbb
T^d,\Rd))$ be a sequence of functions converging to $u$ in
$C^1([\delta,T],L^{2p}(\mathbb T^d,\Rd))\cap
C([\delta,T],H^{2,2p}(\mathbb T^d,\Rd))$. Such sequence can be
constructed, for example, by mollifying of $u$. Then we have by
Ito formula that
\begin{eqnarray}
u_{\eps}(T-t,X_t(x))=u_{\eps}(T,x)+\nonumber\\
\intl_0^t(\nu\triangle u_{\eps}(T-s,X_s)-(u\nabla)u_{\eps}(T-s,X_s)-\frac{\partial u_{\eps}}{\partial t}(T-s,X_s))ds\nonumber\\
+\sqrt{2\nu}\intl_0^t\frac{\partial u_{\eps}}{\partial
x_j}(T-s,X_s)dW_s, t\in [0,T-\delta].\label{eqn:FeynKacApprox-1}
\end{eqnarray}
The last term is a martingale because $\nabla u_{\eps}\in
C([\delta,T],H^{1,p}(\mathbb T^d,\Rd))\subset C([\delta,T]\times
\mathbb T^d,\Rd)$, $p>d$ by Sobolev embedding theorem. Hence
applying mathematical expectation to equality
\eqref{eqn:FeynKacApprox-1} we get
\begin{eqnarray}
u_{\eps}(T,x)=\mathbb{E}u_{\eps}(T-t,X_t(x))+\nonumber\\
\intl_0^t\mathbb{E}((u\nabla)u_{\eps}+\frac{\partial
u_{\eps}}{\partial t}-\nu\triangle u_{\eps}(T-s,X_s))ds,t\in
[0,T-\delta]\label{eqn:FeynKacApprox-2}
\end{eqnarray}
Now let us show convergence  (w.r.t. norm of $L^{\infty}(\mathbb
T^d,\Rd)$) of all terms in \eqref{eqn:FeynKacApprox-2} when we
tend $\eps$ to $0$. We have
\begin{equation}
\supl_{\mathbb T^d}|u_{\eps}(T,x)-u(T,x)|\leq
|u_{\eps}(T)-u(T)|_{H^{1,p}(\mathbb T^d,\Rd)}\to 0,\eps\to
0,\label{eqn:FeynKacApproxConv-1}
\end{equation}
by definition of $u_{\eps}$. Fix $t\in (0,T-\delta]$. Similarly,
\begin{eqnarray}
|\mathbb{E}u_{\eps}(T-t,X_t(\cdot))-\mathbb{E}u(T-t,X_t(\cdot))|_{L^{\infty}(\mathbb T^d,\Rd)}\leq\nonumber\\
|u_{\eps}(T-t)-u(T-t)|_{L^{\infty}(\mathbb
T^d,\Rd)}\stackrel{\eps\to 0}{\rightarrow}
0.\label{eqn:FeynKacApproxConv-2}
\end{eqnarray}
\begin{eqnarray}
|\intl_0^t\mathbb{E}(u\nabla)[u_{\eps}-u](T-s,X_s)ds|_{L^{\infty}(\mathbb T^d,\Rd)}\leq\nonumber\\
\supl_{t\in[\delta,T]}|u(t)|_{L^{\infty}(\mathbb
T^d,\Rd)}|\intl_0^t\mathbb{E}[|\nabla(u_{\eps}-u)(T-s,X_s)|]ds|_{L^{\infty}(\mathbb
T^d,\Rd)}=(I)\nonumber
\end{eqnarray}
Denote
$$
M_t^{\nu}=e^{-\intl_0^tu(T-s,X_s)dX_s-\nu\intl_0^t|u|^2(T-s,X_s)ds},
t\in [0,T-\delta],\nu>0
$$
$M_t^{\nu}$ is a continuous martingale. Indeed, $u$ is bounded
continuous function and the result follows from Theorem 5.3, p.142
in \cite{[Ikeda-1981]}. We can  notice that
\begin{equation}
\mathbb{E}\left(M_t^{\nu}\right)^2\leq
e^{\nu(T-\delta)\supl_{t\in[\delta,T]}|u(t)|_{L^{\infty}(\mathbb
T^d,\Rd)}}=K<\infty
\end{equation}
Notice that by Girsanov type Theorem (see \cite{[Ikeda-1981]}, p.
180-181) we have that
$$
Eg(X_t(x))=EM_t^{\nu}g(x+\sqrt{2\nu}W_t),g\in L^p(\mathbb
T^d,\mathbb{R}).
$$
Thus we have
$$
\mathbb{E}|\nabla(u_{\eps}-u)(T-s,X_s)|=\mathbb{E}M_t^{\nu}|\nabla(u_{\eps}-u)(T-s,x+\sqrt{2\nu}W_s)|
$$
and
\begin{eqnarray*}
(I)\leq \supl_{t\in[\delta,T]}|u(t)|_{L^{\infty}(\mathbb
T^d,\Rd)}\sqrt{T}|(\intl_0^t(\mathbb{E}|\nabla(u_{\eps}-u)(T-s,X_s)|)^2ds)^{1/2}|_{L^{\infty}(\mathbb
T^d,\Rd)}
\end{eqnarray*}
\begin{eqnarray*}
\leq\supl_{t\in[\delta,T]}|u(t)|_{L^{\infty}(\mathbb
T^d,\Rd)}\sqrt{T}|(\intl_0^t\mathbb{E}(M_s^{\nu})^2\mathbb{E}|\nabla(u_{\eps}-u)(T-s,x+\sqrt{2\nu}W_s)|^2ds)^{1/2}|_{L^{\infty}(\mathbb
T^d,\Rd)}
\end{eqnarray*}
\begin{eqnarray*}
\leq\supl_{t\in[\delta,T]}|u(t)|_{L^{\infty}(\mathbb
T^d,\Rd)}\sqrt{KT}(\intl_0^t|\mathbb{E}|\nabla(u_{\eps}-u)(T-s,x+\sqrt{2\nu}W_s)|^2|_{L^{\infty}(\mathbb
T^d,\Rd)}ds)^{1/2}
\end{eqnarray*}
\begin{eqnarray*}
\leq\supl_{t\in[\delta,T]}|u(t)|_{L^{\infty}(\mathbb
T^d,\Rd)}\sqrt{KT}(\intl_0^t|S_s^{\nu}[|\nabla(u_{\eps}-u)(T-s,x)|^2]|_{L^{\infty}(\mathbb
T^d,\Rd)}ds)^{1/2}
\end{eqnarray*}
\begin{eqnarray*}
\leq\supl_{t\in[\delta,T]}|u(t)|_{L^{\infty}(\mathbb
T^d,\Rd)}\sqrt{KT}(\intl_0^t|S_s^{\nu}[|\nabla(u_{\eps}-u)(T-s,x)|^2]|_{H^{1,p}(\mathbb
T^d,\Rd)}ds)^{1/2}
\end{eqnarray*}
\begin{eqnarray*}
\leq\supl_{t\in[\delta,T]}|u(t)|_{L^{\infty}(\mathbb
T^d,\Rd)}\sqrt{KT}(\intl_0^t\frac{1}{s^{1/2}}|[|\nabla(u_{\eps}-u)(T-s,x)|^2]|_{L^p(\mathbb
T^d,\Rd)}ds)^{1/2}
\end{eqnarray*}
\begin{eqnarray*}
\leq\supl_{t\in[\delta,T]}|u(t)|_{L^{\infty}(\mathbb
T^d,\Rd)}\sqrt{KT}(\intl_0^t\frac{|u_{\eps}-u(T-s,x)|_{H^{1,2p}(\mathbb
T^d,\Rd)}^2}{s^{1/2}}ds)^{1/2}
\end{eqnarray*}
\begin{eqnarray*}
\leq\supl_{t\in[\delta,T]}|u(t)|_{L^{\infty}(\mathbb
T^d,\Rd)}\sqrt{K}T^{\frac{3}{4}}\supl_{s\in[\delta,T]}|u_{\eps}-u(s,x)|_{H^{1,2p(\mathbb
T^d,\Rd)}}\stackrel{\eps\to 0}{\rightarrow}0,t\in (0,T-\delta].
\end{eqnarray*}
Thus we have shown convergence of
$\intl_0^t\mathbb{E}(u\nabla)u_{\eps}(T-s,X_s)ds$ to
$\intl_0^t\mathbb{E}(u\nabla)u(T-s,X_s)ds$ in $L^{\infty}(\mathbb
T^d,\Rd)$-norm. Similarly, we have
\begin{eqnarray*}
|\intl_0^t\mathbb{E}(u_{\eps}'-u')(T-s,X_s)ds|_{L^{\infty}(\mathbb T^d,\Rd)}\\
=|\intl_0^t\mathbb{E}M_s^{\nu}
(u_{\eps}'-u')(T-s,x+\sqrt{2\nu}W_s)ds|_{L^{\infty}(\mathbb
T^d,\Rd)}
\end{eqnarray*}
\begin{eqnarray*}
\leq |\sqrt{T}(\intl_0^t(\mathbb{E}[M_s^{\nu}(u_{\eps}'-u')
(T-s,x+\sqrt{2\nu}W_s)])^2ds)^{1/2}|_{L^{\infty(\mathbb
T^d,\Rd)}}\\\leq \sqrt{T}|\intl_0^t
\mathbb{E}(M_s^{\nu})^2\mathbb{E}[| (u_{\eps}'-u')
(T-s,x+\sqrt{2\nu}W_s)|^2]|_{L^{\infty}(\mathbb T^d,\Rd)}^{1/2}
\end{eqnarray*}
\begin{eqnarray*}\leq
\sqrt{TK}|\intl_0^t
S_s^{\nu}[|(u_{\eps}'-u')(T-s,x)|^2]ds|_{L^{\infty}(\mathbb
T^d,\Rd)}^{1/2}\\\leq \sqrt{TK}(\intl_0^t
|S_s^{\nu}[|(u_{\eps}'-u')(T-s,x)|^2]|_{H^{1,p}(\mathbb
T^d,\Rd)}ds)^{1/2}
\end{eqnarray*}
\begin{eqnarray*} \leq
\sqrt{TK}(\intl_0^t\frac{||u_{\eps}'-u'|^2(T-s,x)|_{L^p(\mathbb T^d,\Rd)}}{s^{1/2}}ds)^{1/2}\\
\leq
\sqrt{K}T^{3/4}\supl_{s\in[\delta,T]}|u_{\eps}'(s)-u'(s)|_{L^{2p}(\mathbb
T^d,\Rd)}\to 0,\eps\to 0,t\in (0,T-\delta].
\end{eqnarray*}
For the last  term we have an estimate
\begin{eqnarray*}
|\intl_0^t\mathbb{E}(\triangle u_{\eps}-\triangle u)(T-s,X_s)ds|_{L^{\infty}(\mathbb T^d,\Rd)}\\
=|\intl_0^t\mathbb{E}M_s^{\nu} (\triangle u_{\eps}-\triangle
u)(T-s,x+\sqrt{2\nu}W_s)ds|_{L^{\infty}(\mathbb T^d,\Rd)}\\\leq
|\sqrt{T}(\intl_0^t(\mathbb{E}[M_s^{\nu}(\triangle
u_{\eps}-\triangle u)
(T-s,x+\sqrt{2\nu}W_s)])^2ds)^{1/2}|_{L^{\infty}(\mathbb
T^d,\Rd)}\\\leq \sqrt{T}|\intl_0^t
\mathbb{E}(M_s^{\nu})^2\mathbb{E}[| \triangle (u_{\eps}-u)
(T-s,x+\sqrt{2\nu}W_s)|^2]|_{L^{\infty}(\mathbb
T^d,\Rd)}^{1/2}\\\leq \sqrt{TK}|\intl_0^t S_s^{\nu}[|\triangle
(u_{\eps}-u)(T-s,x)|^2]ds|_{L^{\infty}}^{1/2}\\\leq
\sqrt{TK}(\intl_0^t
|S_s^{\nu}[|\triangle (u_{\eps}-u)(T-s,x)|^2]|_{H^{1,p}(\mathbb T^d,\Rd)}ds)^{1/2}\\
\leq
\sqrt{TK}(\intl_0^t\frac{||\triangle (u_{\eps}-u)|^2(T-s,x)|_{L^p(\mathbb T^d,\Rd)}}{s^{1/2}}ds)^{1/2}\\
\leq
\sqrt{K}T^{3/4}\supl_{s\in[\delta,T]}|u_{\eps}(s)-u(s)|_{H^{2,2p}(\mathbb
T^d,\Rd)}\to 0,\eps\to 0,t\in (0,T-\delta].
\end{eqnarray*}
Thus, we have shown that we can tend $\eps\to 0$ in equality
\eqref{eqn:FeynKacApprox-2}. As a result we get
\begin{eqnarray}
u(T,x)=\mathbb{E}u(T-t,X_t(x))+\nonumber\\
\intl_0^t\mathbb{E}((u\nabla)u+\frac{\partial u}{\partial
t}-\nu\triangle u(T-s,X_s))ds,t\in
[0,T-\delta].\label{eqn:FeynKacApprox-3}
\end{eqnarray}
Put $t=T-\delta$ in equality \eqref{eqn:FeynKacApprox-3}. We have
\begin{eqnarray}
u(T,x)=\mathbb{E}u(\delta,X_t(x))+\nonumber\\
\intl_0^{T-\delta}\mathbb{E}f(T-s,X_s)ds.\label{eqn:FeynKacApprox-4}
\end{eqnarray}
As a consequence we immediately get
\begin{equation}
|u(T)|_{L^{\infty}}\leq
|u(\delta)|_{L^{\infty}}+\intl_0^T|f(s)|_{L^{\infty}(\mathbb
T^d,\Rd)}ds.\label{eqn:FeynKacEst-1}
\end{equation}
Therefore, because torus is compact we have $L^{\infty}(\mathbb
T^d,\Rd)\subset L^p(\mathbb T^d,\Rd)$ and
\begin{equation}
|u(T)|_{L^{p}(\mathbb T^d,\Rd)}\leq
C(|u(\delta)|_{L^{\infty}(\mathbb
T^d,\Rd)}+\intl_0^T|f(s)|_{L^{\infty}(\mathbb
T^d,\Rd)}ds).\label{eqn:FeynKacEst-2}
\end{equation}
Since $u\in C((0,T],H^{1,p}(\mathbb T^d,\Rd))$ and $\delta>0$ is
arbitrary small we have $|u(\delta)|_{L^{\infty}(\mathbb
T^d,\Rd)}\leq |u(\delta)|_{H^{1,p}(\mathbb T^d,\Rd)}<\infty$.
Tending $T\to T_{max}$ in \eqref{eqn:FeynKacEst-2} we get our
estimate.
\end{proof}
The case of Burgers equation in Euclidean space is much more
difficult because $L^{\infty}$ estimate does not allow us to
deduce estimate in $L^p$. In this case we have only following
"conditional" Theorem.
\begin{theorem}\label{thm:GlobalExistence}
Fix $p\in (d,\infty)$. Assume that $u\in
L^{\infty}(0,T;L^p(\Rd,\Rd))$,$\forall T<T_0$ ($T_0$ is such that
$\limsupl_{t\nearrow T_0}|u(t)|_{L^p}^2=\infty$) local solution of
Burgers equation such that $u\in
C^{1,2}((0,T]\times\Rd)$,$u(0)=u_0\in L^p(\Rd,\Rd)$,$f\in
L^p(0,T;L^p(\Rd,\Rd))\cap C^{0,1}((0,T]\times\Rd)$, $\diver f\in
L^{\infty}(0,T_0;L^{\infty}(\Rd))$. Assume also that
\begin{eqnarray}
\omega=\curl u\in
L^{\infty}(0,T_0;L^{\infty}(\Rd)),\label{eqn:BoundedVorticityCond}
\end{eqnarray}
and for any $\delta>0$ there exists $0\leq t_{\delta}<\delta$ such
that $\diver u$ satisfies following growth condition:
\begin{eqnarray}
\exists c>0\,\liminf_{R\to\infty}
e^{-cR^2}[\maxl_{|x|=R,t\in[t_{\delta},T]}\diver u(x,t)]\leq
0,\forall T<T_0,\label{eqn:Phragmen-LindelofCond} \\
\exists 0<t_0<T\supl_x\diver u(t_0,x)\leq
M<\infty.\label{eqn:BoundedConvergenceCond}
\end{eqnarray}
Furthermore, we assume that $u$ has no more than linear growth at
infinity:
\begin{equation}
\limsup_{R\to\infty}\frac{\maxl_{|x|=R,t\in[t_{\delta},T]}
|u(x,t)|}{R}<\infty\forall T<T_0.\label{eqn:MaximumPrincipleCond}
\end{equation}
%
Let $K=p+M+|\omega|_{L^{\infty}(0,T_0;L^{\infty}(\Rd))}+|\diver
f|_{L^{\infty}(0,T_0;L^{\infty}(\Rd))}$. Then $T_0=\infty$.
Moreover, if $K\geq 0$ we have
\begin{eqnarray}
|u(t)|_{L^p(\Rd,\Rd)}^p+\nu
p(p-1)\intl_0^t\intl_{\Rd}\suml_i|u^i|^{p-2}(s,x)|\nabla
u^i(s,x)|^2dxds\nonumber\\
\leq
|u_0|_{L^p}^pe^{Kt}+\intl_{0}^t|f(s)|_{L^p}^pe^{K(t-s)}ds,t\in
(0,\infty).\label{eqn:EnergyInequality}
\end{eqnarray}
Furthermore, if $K<0$ we have
\begin{equation}
|u(t)|_{L^p(\Rd,\Rd)}^p\leq
|u_0|_{L^p}^pe^{Kt}+\intl_{0}^t|f(s)|_{L^p}^pe^{K(t-s)}ds,t\in
(0,\infty).\label{eqn:EnergyInequality'}
\end{equation}
\end{theorem}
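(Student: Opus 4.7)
The strategy is to reduce global existence to an a priori $L^p$ bound via the blow-up criterion Lemma \ref{lem:LocalBlowUpBehaviour}, just as on the torus. The new obstacle on $\Rd$ is the lack of the embedding $L^\infty\hookrightarrow L^p$, so the Feynman--Kac bound used in Theorem \ref{thm:GlobalExistenceTorus} does not suffice by itself. Instead I would bound the $L^p$ norm directly by a weighted energy estimate, an approach that requires only a pointwise one-sided bound on $\diver u$.

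The first step is therefore to extract a uniform-in-time upper bound on $\diver u$ from the hypotheses. Taking the divergence of \eqref{eqn:BurgersEquation-1} and writing $\nabla u=S+A$ with $S$ symmetric and $A$ antisymmetric, the scalar $D:=\diver u$ satisfies
\begin{equation*}
\partial_t D+(u\cdot\nabla)D-\nu\Delta D=-\operatorname{tr}(S^2)+|A|^2+\diver f .
\end{equation*}
Using the elementary inequalities $\operatorname{tr}(S^2)\geq \tfrac{1}{d}D^2$ and $|A|^2\leq \tfrac{1}{2}|\omega|^2$, this becomes a Riccati-type differential inequality with a bounded source. I would then compare $D$ to the spatially-constant supersolution $\phi(t)$ of $\phi'=-\tfrac{1}{d}\phi^2+\tfrac{1}{2}|\omega|_{L^\infty}^2+|\diver f|_{L^\infty}$ starting from $\phi(t_0)=M$; this $\phi$ stays bounded by $\max(M,\sqrt{dC})$, which is \emph{linear} in $|\omega|_{L^\infty}$. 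The pointwise comparison $D(t,x)\leq \phi(t)$ is obtained by a parabolic maximum principle on $\Rd$; this is the step where the Phragm\'en--Lindel\"of-type condition \eqref{eqn:Phragmen-LindelofCond} (to rule out suprema at spatial infinity), the initial bound \eqref{eqn:BoundedConvergenceCond}, and the linear growth \eqref{eqn:MaximumPrincipleCond} of the drift $u$ enter.

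With $\sup_x \diver u(t,x)\leq \bar D$ in hand, the $L^p$ estimate becomes a routine testing argument: multiply the $i$-th component of \eqref{eqn:BurgersEquation-1} by $|u^i|^{p-2}u^i$, integrate over $\Rd$ (all integrations by parts being justified via a cut-off argument using the linear growth of $u$ from \eqref{eqn:MaximumPrincipleCond}), and sum in $i$. The convective term produces $\int(\diver u)\sum_i|u^i|^p$, the dissipative term contributes the negative $-\nu p(p-1)\sum_i\int|u^i|^{p-2}|\nabla u^i|^2$ on the left-hand side of \eqref{eqn:EnergyInequality}, and Young's inequality on the forcing term gives a differential inequality with Gronwall constant $K$ as stated in the theorem. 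Integration produces \eqref{eqn:EnergyInequality}; for $K<0$ one simply discards the (nonnegative) dissipation term to obtain the cleaner \eqref{eqn:EnergyInequality'}. Since $\bar D$ (and hence $K$) depends only on the hypotheses and not on $T$, this uniform $L^p$ bound combined with Lemma \ref{lem:LocalBlowUpBehaviour} forces $T_0=\infty$.

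The main obstacle is the divergence bound of step two: making the parabolic maximum principle rigorous on $\Rd$ for a Riccati-type inequality. Conditions \eqref{eqn:Phragmen-LindelofCond}--\eqref{eqn:MaximumPrincipleCond} are precisely the growth hypotheses tailored to this argument. Everything else is a standard $L^p$ energy calculation once the one-sided bound on $\diver u$ is secured.
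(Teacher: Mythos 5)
Your proposal is correct in outline, and its second half (testing the $i$-th equation with $\sgn(u^i)|u^i|^{p-1}$, Young's inequality on the forcing, Gronwall, and dropping the nonnegative dissipation when $K<0$) is exactly what the paper does. Where you genuinely diverge is in the key step, the one-sided bound on $\diver u$. The paper does not exploit the Riccati structure: starting from $\diver(u\nabla)u=(u\nabla)\diver u+|\nabla u|^2-|\curl u|^2$ it splits the set $D=\{r:=\diver u\ge 0\}$ into $D^+$, where $|\nabla u|^2-|\curl u|^2-\diver f\ge 0$ so that $r$ is a subsolution of $\nu\triangle-(u\nabla)-\partial_t$ and the Phragm\'en--Lindel\"of principle applies, and $D^-$, where the reversed inequality yields the purely algebraic pointwise bound $r\le|\nabla u|\le|\curl u|+|\diver f|$; the parabolic boundary values of $r$ on $\partial D^+$ are then controlled by the $D^-$ bound together with \eqref{eqn:BoundedConvergenceCond}. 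Your route instead keeps the good term $-\operatorname{tr}(S^2)\le-\tfrac1d(\diver u)^2$ and compares with a spatially constant ODE supersolution $\phi$; this is arguably cleaner and avoids discussing the structure of $\partial D^+$, while the paper's version avoids any nonlinear comparison. Two caveats on your version. First, to compare $D$ with $\phi$ you must apply the maximum principle to $w=D-\phi$, whose inequality carries the zeroth-order coefficient $-\tfrac1d(D+\phi)$; this coefficient is bounded above only on the set $\{w>0\}$ (where $D>\phi\ge\min(M,0)$), so you are in effect forced into the same restriction to a subregion that the paper makes explicit via $D^+$ --- this needs to be spelled out, and the growth conditions \eqref{eqn:Phragmen-LindelofCond}--\eqref{eqn:MaximumPrincipleCond} enter at exactly the same point as in the paper. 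Second, your resulting bound $\bar D=\max(M,\sqrt{dC})$ with $C=\tfrac12|\omega|_{L^\infty}^2+|\diver f|_{L^\infty}$ does not reproduce the specific constant $K=p+M+|\omega|_{L^\infty}+|\diver f|_{L^\infty}$ appearing in \eqref{eqn:EnergyInequality}--\eqref{eqn:EnergyInequality'}: for $d\ge 3$ one has $\sqrt{d/2}\,|\omega|_{L^\infty}>|\omega|_{L^\infty}$, and for $\omega=0$ and small $|\diver f|_{L^\infty}$ one has $\sqrt{d\,|\diver f|_{L^\infty}}>|\diver f|_{L^\infty}$. So your argument proves the estimates with a different (generally larger) exponential rate; the qualitative conclusion $T_0=\infty$ is unaffected, but to recover the stated $K$ verbatim you would need the paper's $D^{+}/D^{-}$ splitting (or an adjusted statement).
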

\begin{remark}
Similar condition for Navier-Stokes equation is called
Beale-Kato-Majda condition (see \cite{BealeKatoMajda}).
\end{remark}
\begin{remark}
In the case when compatibility conditions are satisfied and we
have that $\diver u\in C([0,T]\times\Rd)$ we can put $t_0=0$ in
the condition \eqref{eqn:BoundedConvergenceCond}.
\end{remark}
\begin{remark}
If $K<0$ and $\intl_{0}^t|f(s)|_{L^p}^pe^{K(t-s)}ds\to
0,t\to\infty$ than we immediately get that $u(t)\to 0,t\to\infty$
in $L^p$ norm.
\end{remark}
\begin{proof}[Proof of Theorem \ref{thm:GlobalExistence}]
Assume $T_0<\infty$. Fix $t_0>0$ such that
\begin{equation}
\supl_x\diver u(t_0,x)\leq M+1\label{eqn:BoundedConvergenceCond-2}
\end{equation}
and
\begin{equation}
\liminf_{R\to\infty} e^{-cR^2}[\maxl_{|x|=R,t\in[t_0,T]}\diver
u(x,t)]\leq 0,\forall T<T_0.\label{eqn:Phragmen-LindelofCond-2}
\end{equation}
Existence of such $t_0$ follows from
\eqref{eqn:BoundedConvergenceCond} and
\eqref{eqn:Phragmen-LindelofCond}.
Let us multiply $i$-th equation of system
\eqref{eqn:BurgersEquation-1} on
$\sgn(u^i)|u^i|^{p-1},i=1,\ldots,d$, take a sum w.r.t. $i$ and
integrate  w.r.t. to space variable. We get
\begin{eqnarray}
\frac{d}{dt}|u(t)|_{L^p(\Rd,\Rd)}^p+\nu
p(p-1)\intl_{\Rd}\suml_i|u^i|^{p-2}(s,x)|\nabla
u^i(s,x)|^2dx\nonumber\\
=\intl_{\Rd}\suml_i|u_i(t,x)|^p\diver u
dx+p\intl_{\Rd}\suml_if^i\sgn(u^i)|u^i|^{p-1}dx\label{eqn:aux-1}
\end{eqnarray}
Fix $t_1\geq t_0$. Integrating  w.r.t. to time from $t_1$ to $t$
and applying Young inequality we get
\begin{eqnarray}
|u(t)|_{L^p(\Rd,\Rd)}^p+\nu
p(p-1)\intl_{t_1}^t\intl_{\Rd}\suml_i|u^i|^{p-2}(s,x)|\nabla
u^i(s,x)|^2dxds\nonumber\\
\leq
|u(t_1)|_{L^p(\Rd,\Rd)}^p+p\intl_{t_1}^t\intl_{\Rd}\suml_i|u^i|^{p-1}|f_i|dxds+\nonumber\\
\intl_{t_1}^t\intl_{\Rd}\suml_i|u^i(s,x)|^p\diver u(s,x)dxds\nonumber\\
\leq |u(t_1)|_{L^p(\Rd,\Rd)}^p+\intl_{t_1}^t|f(s)|_{L^p}^pds+(p-1)\intl_{t_1}^t|u(s)|_{L^p}^pds+\nonumber\\
\intl_{t_1}^t\intl_{\Rd}\suml_i|u^i(s,x)|^p\diver
u(s,x)dxds.\label{eqn:EnergyIneqAux-1}
\end{eqnarray}
Now let us denote $r=\diver u$.Taking $\diver$ of equation
\eqref{eqn:BurgersEquation-1} we get
\begin{equation}
\frac{\partial r}{\partial t}+(u\nabla)r-\nu\triangle r+|\nabla
u|^2-|\curl u|^2-\diver f=0\label{eqn:DivergenceEqn-1}
\end{equation}
Indeed
\begin{eqnarray}
\diver (u\nabla)u=(u\nabla)\diver u+\suml_{i,j}\frac{\partial
u^i}{\partial x^j}\frac{\partial u^j}{\partial x^i}\nonumber\\
=(u\nabla)\diver u+\suml_{i,j}(\frac{\partial u^i}{\partial
x^j})^2+\frac{\partial u^i}{\partial x^j}(\frac{\partial
u^j}{\partial x^i}-\frac{\partial u^i}{\partial
x^j})\nonumber\\
=(u\nabla)\diver u+|\nabla u|^2-\suml_{i<j}(\frac{\partial
u^i}{\partial x^j}-\frac{\partial u^j}{\partial
x^i})^2\label{eqn:aux-2}
\end{eqnarray}
Let us denote
$$
D=\{(t,x)\in [t_0,T]\times\Rd|r(t,x)\geq 0\},
$$
$$
D^{+}=\{(t,x)\in [t_0,T]\times\Rd|r(t,x)\geq 0,|\nabla
u|^2(t,x)-|\curl u|^2(t,x)-\diver f\geq 0\},
$$
$$
D^{-}=\{(t,x)\in [t_0,T]\times\Rd|r(t,x)\geq 0,|\nabla
u|^2(t,x)-|\curl u|^2(t,x)-\diver f<0\}.
$$
Then $D=D^{+}\cup D^{-}$ and we have that
\begin{equation}
r(t,x)=\diver u(t,x)\leq|\nabla u|(t,x)< |\curl u|(t,x)+|\diver
f|,(t,x)\in D^{-}.\label{eqn:SuplEstimate-1}
\end{equation}
Furthermore, for all $(t,x)\in D^{+}$ we have that
\begin{equation}
\nu\triangle r-(u\nabla)r-\frac{\partial r}{\partial t}=|\nabla
u|^2-|\curl u|^2-\diver f\geq 0,
\end{equation}
$u$ has no more than linear growth on the set $[t_0,T]\times\Rd$
because $u\in C^{1,2}([t_0,T]\times\Rd)$ and condition
\eqref{eqn:MaximumPrincipleCond} is satisfied. Moreover, condition
\eqref{eqn:Phragmen-LindelofCond-2} is also satisfied. Therefore,
by Phragmen-Lindelof principle (see
\cite{ProtterWeinberger},chapter 3, section 6, theorem 10 and
remark (i) after the proof of thm. 10) we have that
\begin{eqnarray}
r(t,x)\leq \max(\supl_{y\in \Rd}\diver u(t_0,y),\supl_{s\in
(t_0,T),y\in\partial D^{+}}r(s,y))\leq \supl_{y\in \Rd}\diver
u(t_0,y)\nonumber\\
+\supl_{s\in (t_0,T),y\in\partial D^{+}}|\curl u(s,y)|+|\diver
f|\leq M+1\nonumber\\
+|\curl
u|_{L^{\infty}(0,T_0;L^{\infty}(\Rd))}+|\diver f|_{L^{\infty}(0,T_0;L^{\infty}(\Rd))},\label{eqn:SuplEstimate-2}\\
(t,x)\in D^{+}\cap \{t_0<t<T,x\in\Rd\}.\nonumber
\end{eqnarray}
Combining estimates \eqref{eqn:SuplEstimate-1} and
\eqref{eqn:SuplEstimate-2} we get
\begin{eqnarray}
r(t,x)=\diver u(t,x)\leq M+1+|\curl
u|_{L^{\infty}(0,T_0;L^{\infty}(\Rd))}\label{eqn:SuplEstimate-3}\\
+|\diver f|_{L^{\infty}(0,T_0;L^{\infty}(\Rd))}, (t,x)\in D\cap
\{t_0<t<T,x\in\Rd\}.\nonumber
\end{eqnarray}
Thus, combining estimate \eqref{eqn:SuplEstimate-3} and inequality
\eqref{eqn:EnergyIneqAux-1} we get
\begin{eqnarray}
|u(t)|_{L^p(\Rd,\Rd)}^p+\nu
p(p-1)\intl_{t_1}^t\intl_{\Rd}\suml_i|u^i|^{p-2}(s,x)|\nabla
u^i(s,x)|^2dxds\nonumber\\
\leq |u(t_1)|_{L^p(\Rd,\Rd)}^p+\intl_{t_1}^t|f(s)|_{L^p}^pds\label{eqn:EnergyIneqAux-2}\\
+\intl_{t_1}^t(p+M+|\curl
u|_{L^{\infty}(0,T_0;L^{\infty}(\Rd))}+|\diver
f|_{L^{\infty}(0,T_0;L^{\infty}(\Rd))})|u(s)|_{L^p}^pds\nonumber
\end{eqnarray}
Denote $K=p+M+|\omega|_{L^{\infty}(0,T_0;L^{\infty}(\Rd))}+|\diver
f|_{L^{\infty}(0,T_0;L^{\infty}(\Rd))}$. Then we can rewrite
\eqref{eqn:EnergyIneqAux-2} as follows
\begin{eqnarray}
|u(t)|_{L^p(\Rd,\Rd)}^p&-&|u(t_1)|_{L^p(\Rd,\Rd)}^p\nonumber\\
&+&\nu
p(p-1)\intl_{t_1}^t\intl_{\Rd}\suml_i|u^i|^{p-2}(s,x)|\nabla
u^i(s,x)|^2dxds\nonumber\\
&\leq&\intl_{t_1}^t|f(s)|_{L^p}^pds+\intl_{t_1}^tK|u(s)|_{L^p}^pds\label{eqn:EnergyIneqAux-3}
\end{eqnarray}
Dividing \eqref{eqn:EnergyIneqAux-3} on $(t-t_1)$ and tending
$t_1$ to $t$ we get
\begin{eqnarray}
\frac{d}{dt}|u(t)|_{L^p(\Rd,\Rd)}^p&+&\nu
p(p-1)\intl_{\Rd}\suml_i|u^i|^{p-2}(t,x)|\nabla u^i(t,x)|^2dx\nonumber\\
&\leq& |f(t)|_{L^p}^p+K|u(t)|_{L^p}^p,t\in (t_0,T).
\end{eqnarray}
Denote
$$
v(t)=|u(t_0)|_{L^p(\Rd,\Rd)}^pe^{K(t-t_0)}+\intl_{t_0}^t|f(s)|_{L^p}^pe^{K(t-s)}ds,
t\in [t_0,T].
$$
Then $v(t_0)=|u(t_0)|_{L^p(\Rd,\Rd)}^p$ and
\begin{equation}
\frac{d}{dt}v(t)=|f(t)|_{L^p}^p+Kv(t),t\in [t_0,T].
\end{equation}
Consequently, $|u(t_0)|_{L^p(\Rd,\Rd)}^p-v(t_0)=0$ and
\begin{equation}
\frac{d}{dt}(|u(t)|_{L^p(\Rd,\Rd)}^p-v(t))\leq
K(|u(t)|_{L^p(\Rd,\Rd)}^p-v(t)),t\in (t_0,T),
\end{equation}
Therefore, by Gronwall lemma \ref{lem:GronwallDifferentialForm} we
have that
\begin{equation}
|u(t)|_{L^p(\Rd,\Rd)}^p\leq
|u(t_0)|_{L^p(\Rd,\Rd)}^pe^{K(t-t_0)}+\intl_{t_0}^t|f(s)|_{L^p}^pe^{K(t-s)}ds,t\in[t_0,T].\label{eqn:EnergyInequality-2}
\end{equation}
Tending $t_0$ to $0$ we get inequality
\eqref{eqn:EnergyInequality'}. Furthermore, in the case of $K\geq
0$, inserting inequality \eqref{eqn:EnergyInequality-2} in the
right part of inequality \eqref{eqn:EnergyIneqAux-2} we get
\begin{eqnarray}
|u(t)|_{L^p(\Rd,\Rd)}^p+\nu
p(p-1)\intl_{t_0}^t\intl_{\Rd}\suml_i|u^i|^{p-2}(s,x)|\nabla
u^i(s,x)|^2dxds\nonumber\\
\leq
|u(t_0)|_{L^p(\Rd,\Rd)}^pe^{K(t-t_0)}+\intl_{t_0}^t|f(s)|_{L^p}^pe^{K(t-s)}ds,t\in[t_0,T].\label{eqn:EnergyInequality-2'}
\end{eqnarray}
Tending $t_0$ to $0$ we
get inequality \eqref{eqn:EnergyInequality}. Tending $t$ to $T_0$
we get contradiction.
\end{proof}
\begin{corollary}\label{cor:GlobalExistence}
Fix $p>d$. Assume that $u_0\in L^p(\Rd,\Rd)$, $f\in
L^{\infty}([0,T],L^{\frac{2p}{3}}(\Rd,\Rd)\cap L^p(\Rd,\Rd))$,
$f\in C^{\theta}([\eps,T],H^{4,p}(\Rd,\Rd))$, $\forall \eps>0$,
$\diver f\in L^{\infty}(0,T;L^{\infty}(\Rd))$. Then there exists
unique local solution $u\in L^{\infty}(0,T_0;L^p(\Rd,\Rd))\cap
C^{1,2}((0,T_0]\times\Rd)$ for some $T_0<T$. Furthermore, if this
local solution satisfies conditions
\eqref{eqn:BoundedVorticityCond},\eqref{eqn:Phragmen-LindelofCond},
\eqref{eqn:BoundedConvergenceCond} on interval $[0,T_0]$ than it
is global solution i.e. $T_0=T$ and energy type inequality
\eqref{eqn:EnergyInequality} is satisfied (with corresponding
$p$).
\end{corollary}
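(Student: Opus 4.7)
The plan is to combine the local existence of Theorem \ref{thm:LocalExistence-2} with the regularity bootstrap supplied by Corollaries \ref{cor:ClassicalSolutionReg} and \ref{cor:ClassicalSolutionReg-2}, producing a local solution with the classical $C^{1,2}$ regularity required by Theorem \ref{thm:GlobalExistence}, and then invoke that theorem directly. The only hypothesis of Theorem \ref{thm:GlobalExistence} not already present in the statement of the corollary is the linear growth condition \eqref{eqn:MaximumPrincipleCond}, which I will extract for free from Sobolev embedding.

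First I would apply Theorem \ref{thm:LocalExistence-2} to obtain a unique mild solution $u\in L^{\infty}(0,T_0;L^p(\Rd,\Rd))$ on some maximal existence interval $[0,T_0)$, with blow-up characterised by Lemma \ref{lem:LocalBlowUpBehaviour}. Since the assumptions on $f$ are strictly stronger than those of Corollary \ref{cor:ClassicalSolutionReg}, I obtain additionally $u\in C^{\theta}([\eps,T_0),H^{2,p}(\Rd,\Rd))\cap C^{1+\theta}([\eps,T_0),L^p(\Rd,\Rd))$ for every $\eps>0$, and $u$ solves Burgers equation in the classical sense. Iterating Corollary \ref{cor:ClassicalSolutionReg-2} using the assumed $f\in C^{\theta}([\eps,T],H^{4,p}(\Rd,\Rd))$, I then upgrade to $u\in C^{\theta}([\eps,T_0),H^{6,p}(\Rd,\Rd))\cap C^{1+\theta}([\eps,T_0),H^{4,p}(\Rd,\Rd))$. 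Since $p>d$, Sobolev embedding on $\Rd$ gives $H^{4,p}\embed C_b^{2}$ and $H^{6,p}\embed C_b^{4}$, so combined with the H\"older regularity in time this places $u$ in $C^{1,2}((0,T_0)\times\Rd)$, as needed to apply Theorem \ref{thm:GlobalExistence}.

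With this regularity in hand, I would verify the remaining hypothesis \eqref{eqn:MaximumPrincipleCond} of Theorem \ref{thm:GlobalExistence} (conditions \eqref{eqn:BoundedVorticityCond}, \eqref{eqn:Phragmen-LindelofCond} and \eqref{eqn:BoundedConvergenceCond} are assumed in the corollary). The embedding $H^{4,p}(\Rd,\Rd)\embed L^{\infty}(\Rd,\Rd)$ together with $u\in C([\eps,T_0),H^{4,p})$ for every $\eps>0$ shows that $u$ is bounded on $[t_{\delta},T]\times\Rd$ for every $\delta>0$ and $T<T_0$, so the ratio $\max_{|x|=R,\,t\in[t_\delta,T]}|u(x,t)|/R$ actually tends to zero as $R\to\infty$, making \eqref{eqn:MaximumPrincipleCond} trivial. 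Applying Theorem \ref{thm:GlobalExistence} then yields $T_0=T$ together with the energy inequality \eqref{eqn:EnergyInequality} for the stated $p$.

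The main technical point is the regularity bootstrap: one must check that the iterations of Corollary \ref{cor:ClassicalSolutionReg-2} go through for $\mathcal{O}=\Rd$, which reduces to the fact that the heat semigroup estimates of Proposition \ref{prop:HeatSemigroup} and the Lunardi maximal regularity results invoked in the proof hold on the full space (standard for smoothing semigroups on $\Rd$), together with the choice of the number of derivatives so that $H^{k,p}\embed C^{2}_b$. Since $p>d$, the condition $k>2+d/p$ is satisfied with margin by $k=4$, which is precisely why the corollary assumes $f\in C^{\theta}([\eps,T],H^{4,p}(\Rd,\Rd))$. Once this bootstrap is in place, Theorem \ref{thm:GlobalExistence} supplies both the global existence and the energy inequality with no further work.
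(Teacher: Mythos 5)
Your proposal is correct and follows essentially the same route as the paper's own (much terser) proof: local existence and $C^{1,2}$ regularity via Corollary \ref{cor:ClassicalSolutionReg-2}, verification of \eqref{eqn:MaximumPrincipleCond} by Sobolev embedding, and then Lemma \ref{lem:LocalBlowUpBehaviour} combined with Theorem \ref{thm:GlobalExistence}. Your write-up merely spells out the regularity bootstrap to $H^{6,p}$ and the observation that boundedness of $u$ makes the linear-growth condition trivial, details the paper leaves implicit.
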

\begin{proof}[Proof of Corollary \ref{cor:GlobalExistence}] Existence of local solution follows from
Corollary \ref{cor:ClassicalSolutionReg-2}. Local solution
satisfies condition \eqref{eqn:MaximumPrincipleCond} by Sobolev
Embedding Theorem (Proposition 2.4, p.5 of \cite{Taylor}). Now
Proof immediately follows from Lemma
\ref{lem:LocalBlowUpBehaviour} and Theorem
\ref{thm:GlobalExistence}.
\end{proof}
\begin{remark}
It is possible to prove in the same way similar theorem and
corollary for torus. In this case, conditions
\eqref{eqn:Phragmen-LindelofCond},
\eqref{eqn:BoundedConvergenceCond} and
\eqref{eqn:MaximumPrincipleCond} will disappear.
\end{remark}
\begin{remark}
If initial condition $u_0$ and force $f$ are irrotational (i.e.
$\curl u_0=\curl f=0$) than $\curl u(t)=0$ and condition
\eqref{eqn:BoundedVorticityCond} is satisfied.
\end{remark}
\begin{remark}
Let us consider case $d=2$ and assume for simplicity that $\diver
f=0$. Then on the boundary of $D^{+}$ we will have that
$$
|\nabla u|^2(t,x)=|\curl u|^2(t,x), (t,x)\in \partial D^{+}.
$$
Therefore, we can deduce that
$$
2\det \nabla u(t,x)=(\diver u)^2(t,x),(t,x)\in \partial D^{+}.
$$
Similarly, we would get
$$
2\det \nabla u(t,x)> (\diver u)^2(t,x),(t,x)\in D^{-}.
$$
As a result, one can consider instead of the assumption that
vorticity is bounded, assumption that jacobian is bounded. It
would be interesting to understand physical meaning of such
assumption. It would also be interesting to acquire better
understanding of the structure of the boundary $\partial D^{+}$.
\end{remark}
\begin{remark}
If we consider 2D Navier-Stokes equation without force then
equation for pressure has form
$$
\triangle p=-2\det \nabla v,
$$
where $p$ is a pressure, $v$ is a velocity. As a result, we have
that $p$ is a subharmonic (resp. superharmonic) function if $v$
conserves (resp. changes) orientation. It would be of interest to
understand physical consequences of this fact.
\end{remark}
In the next theorem we show the application of the Corollary
\ref{cor:GlobalExistence} to the Kardar-Zhang-Parisi (KZP)
equation. We formulate it for torus to get rid of the assumptions
on behavior of the solution when $|x|\to\infty$.
\begin{theorem}\label{thm:KZPequationExistence}
Fix $p>d$. Let $\psi_0\in H^{1,p}(\mathbb T^d)$, $h\in
L^{\infty}([0,T],H^{1,\frac{2p}{3}}(\mathbb T^d)\cap
H^{1,p}(\mathbb T^d))$, $h\in C^{\theta}([\eps,T],H^{5,p}(\mathbb
T^d))$, $\forall \eps>0$, $\triangle h\in
L^{\infty}(0,T;L^{\infty}(\mathbb T^d))$. Then there exists unique
solution $\psi^{\nu}\in C(0,T;H^{1,p}(\mathbb T^d))\cap
C^{1,2}((0,T]\times \mathbb T^d)$ of equation
\begin{eqnarray}
\frac{\partial\psi^{\nu}}{\partial t}+|\nabla\psi^{\nu}|^2=\nu \triangle\psi^{\nu} +h\label{eqn:KDP-1}\\
\psi^{\nu}(0)=\psi_0,t\in [0,T],\nu>0.\label{eqn:KDP-2}
\end{eqnarray}
Furthermore,
\begin{eqnarray}
|\psi^{\nu}(t)|_{H^{1,p}}^p\leq
|\psi_0|_{H^{1,p}}^pe^{Kt}+\intl_0^t|h(s)|_{H^{1,p}}^pe^{K(t-s)}ds,t>0,\label{eqn:AprioriEstim}
\end{eqnarray}
where $K=K(h,p,\psi_0)$.
\end{theorem}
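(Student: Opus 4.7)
The plan is to reduce the KPZ equation to the Burgers equation from the previous section via the change of variables $u = 2\nabla\psi^{\nu}$, $u_0 = 2\nabla\psi_0$, $f = 2\nabla h$. Since $u$ is (formally) a gradient, it is curl-free and so $\partial_i u^j = \partial_j u^i$, which gives the algebraic identity $(u\cdot\nabla)u = 2\nabla|\nabla\psi^{\nu}|^2$. Taking the gradient of \eqref{eqn:KDP-1} therefore produces the forced Burgers equation
\[
\partial_t u + (u\cdot\nabla)u = \nu\Delta u + f,\qquad u(0) = u_0.
\]

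First I would apply the torus version of Corollary \ref{cor:GlobalExistence} (the one mentioned in the remark following it) to obtain a unique global solution $u$ of this Burgers equation. The required hypotheses are straightforward to verify from the assumptions on $\psi_0$ and $h$: $u_0 \in L^p(\mathbb{T}^d,\Rd)$ since $\psi_0 \in H^{1,p}(\mathbb{T}^d)$; $f = 2\nabla h$ sits in $L^\infty([0,T], L^{2p/3}\cap L^p)$ and in $C^\theta([\eps,T], H^{4,p})$ by the corresponding assumptions on $h$; and $\diver f = 2\Delta h \in L^\infty$ by hypothesis. The vorticity condition \eqref{eqn:BoundedVorticityCond} is automatic: the tensor $\omega_{ij} = \partial_i u^j - \partial_j u^i$ satisfies a linear transport-diffusion equation whose source $\partial_i f^j - \partial_j f^i$ vanishes (because $f$ is a gradient); combined with $\omega(0) = 0$, uniqueness forces $\omega\equiv 0$, so $u(t,\cdot)$ remains a gradient for all $t$.

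Given this, $u(t, \cdot) = 2\nabla\tilde\psi(t, \cdot)$ for some family $\tilde\psi(t, \cdot)$ of functions on $\mathbb T^d$, unique up to an additive function of $t$. A direct computation using the Burgers equation shows that $\tilde\psi$ satisfies \eqref{eqn:KDP-1} up to a spatially constant term $c(t)$; subtracting $\int_0^t c(s)\,ds$ and an appropriate additive constant produces $\psi^{\nu}$ satisfying both \eqref{eqn:KDP-1} and \eqref{eqn:KDP-2}. The regularity claim for $\psi^{\nu}$ is inherited from the regularity of $u$ provided by Corollary \ref{cor:GlobalExistence}, since on the torus the antiderivative gains one spatial derivative. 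Uniqueness is immediate from uniqueness of the Burgers solution: two KPZ solutions with the same data produce the same $u = 2\nabla\psi^{\nu}$, so they differ by a function of $t$; comparing their KPZ equations then forces this function to have vanishing time derivative, and the initial condition closes the argument.

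For the a priori bound \eqref{eqn:AprioriEstim}, I would apply the Burgers energy inequality \eqref{eqn:EnergyInequality} to $u = 2\nabla\psi^{\nu}$, which directly yields the exponential bound on $|\nabla\psi^{\nu}|_{L^p}^p$ with a constant $K = K(h,p,\psi_0)$ read off from Theorem \ref{thm:GlobalExistence}. To upgrade from $|\nabla\psi^{\nu}|_{L^p}$ to the full $H^{1,p}$-norm, I would control the spatial mean $\bar\psi^{\nu}(t)$ separately: integrating \eqref{eqn:KDP-1} over $\mathbb T^d$ yields the ODE $\frac{d}{dt}\bar\psi^{\nu} = \bar h - \overline{|\nabla\psi^{\nu}|^2}$, and combining this with the Poincar\'e inequality $|\psi^{\nu} - \bar\psi^{\nu}|_{L^p} \leq C|\nabla\psi^{\nu}|_{L^p}$ together with Jensen's inequality (to estimate $\overline{|\nabla\psi^{\nu}|^2}$ by a power of $|\nabla\psi^{\nu}|_{L^p}^p$) produces the required bound after one more application of Gr\"onwall and a redefinition of $K$. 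I expect this last bookkeeping step, absorbing the mean estimate into the clean exponential form of \eqref{eqn:AprioriEstim}, to be the main technical obstacle.
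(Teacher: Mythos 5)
Your proposal follows exactly the paper's route: the paper's entire proof is the one-line observation that the gradient of a KPZ solution solves the Burgers equation, combined with the torus version of Corollary \ref{cor:GlobalExistence}, and your reduction $u=2\nabla\psi^{\nu}$, $f=2\nabla h$ with the vorticity condition holding automatically for gradient data is precisely that argument. You in fact supply considerably more detail than the paper does (reconstruction of $\psi^{\nu}$ from $u$, uniqueness, and control of the spatial mean needed to pass from $|\nabla\psi^{\nu}|_{L^p}$ to the full $H^{1,p}$ bound), all of which is sound.
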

\begin{proof}[Proof of Theorem \ref{thm:KZPequationExistence}]
Proof immediately follows from Corollary \ref{cor:GlobalExistence}
and the fact that gradient of solution of KZP equation is a
solution of Burgers equation.
\end{proof}
We can notice that estimate \eqref{eqn:AprioriEstim} is uniform
w.r.t. $\nu$. This leads us to the following Corollary.
\begin{corollary}
Fix $p>d$. Let $\psi_0\in H^{1,p}(\mathbb T^d)$, $\nabla h\in
L^1(0,T;L^{\infty}(\mathbb T^d))$, $\triangle h\in
L^{\infty}(0,T;L^{\infty}(\mathbb T^d))$. Then there exists unique
viscosity solution $\psi\in C(0,T;H^{1,p}(\mathbb T^d))$ of
equation
\begin{eqnarray}
\frac{\partial\psi}{\partial t}+|\nabla\psi|^2=h\label{eqn:KDP-3}\\
\psi(0)=\psi_0,t\in [0,T].\label{eqn:KDP-4}
\end{eqnarray}
Furthermore,
\begin{eqnarray}
|\psi(t)|_{H^{1,p}}^p\leq
|\psi_0|_{H^{1,p}}^pe^{Kt}+\intl_0^t|h(s)|_{H^{1,p}}^pe^{K(t-s)}ds,t>0,\label{eqn:AprioriEstim-2}
\end{eqnarray}
where $K=K(h,p,d,\psi_0)$.
\end{corollary}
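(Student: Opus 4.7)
The plan is to construct $\psi$ as the vanishing-viscosity limit of the smooth solutions $\psi^{\nu}$ furnished by Theorem~\ref{thm:KZPequationExistence}. Because the present hypotheses on $h$ are weaker than those required there, a preliminary regularization step is needed: choose mollifications $h^{\epsilon}$ of $h$ (in space and time) and smooth approximations $\psi_0^{\epsilon}$ of $\psi_0$ so that the hypotheses of Theorem~\ref{thm:KZPequationExistence} hold for each $\epsilon>0$, while $\|\nabla h^{\epsilon}\|_{L^1(0,T;L^{\infty}(\mathbb T^d))}$, $\|\triangle h^{\epsilon}\|_{L^{\infty}(0,T;L^{\infty}(\mathbb T^d))}$ and $\|\psi_0^{\epsilon}\|_{H^{1,p}(\mathbb T^d)}$ remain uniformly bounded. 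Theorem~\ref{thm:KZPequationExistence} then produces $\psi^{\nu,\epsilon}$ satisfying \eqref{eqn:AprioriEstim} with a constant $K$ depending only on these uniform bounds, hence independent of both $\nu$ and $\epsilon$.

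Compactness is the next step. Since $p>d$, Morrey's embedding applied to the uniform $H^{1,p}$ bound yields uniform-in-$(\nu,\epsilon)$ control on the spatial modulus of continuity of $\psi^{\nu,\epsilon}(t,\cdot)$. For equicontinuity in $t$, I would invoke the comparison principle for the viscous Hamilton-Jacobi equation: comparing $\psi^{\nu,\epsilon}(t+\tau,\cdot)$ with $\psi^{\nu,\epsilon}(t,\cdot)\pm\omega_x(\tau^{1/2})\pm\intl_t^{t+\tau}\|h^{\epsilon}(s)\|_{L^{\infty}}\,ds$, where $\omega_x$ is the uniform spatial modulus, produces a time modulus independent of $\nu$ and $\epsilon$. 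Arzel\`a--Ascoli then yields a subsequence converging uniformly on $[0,T]\times\mathbb T^d$ to a continuous function $\psi$.

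The standard stability theorem for viscosity solutions under uniform convergence guarantees that $\psi$ is a viscosity solution of \eqref{eqn:KDP-3}--\eqref{eqn:KDP-4}, and weak-$*$ lower semicontinuity of the $H^{1,p}$ norm lets \eqref{eqn:AprioriEstim} pass to \eqref{eqn:AprioriEstim-2}. Uniqueness is the classical comparison principle for Hamilton-Jacobi equations on the compact torus with continuous Hamiltonian $H(p)=|p|^2$ and continuous source, which also upgrades the subsequential convergence to convergence of the entire family $\psi^{\nu,\epsilon}\to\psi$.

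The main obstacle will be the equicontinuity-in-$t$ estimate: a naive argument based on $\partial_t\psi^{\nu,\epsilon}=\nu\triangle\psi^{\nu,\epsilon}-|\nabla\psi^{\nu,\epsilon}|^2+h^{\epsilon}$ fails, since $\nu\triangle\psi^{\nu,\epsilon}$ carries no obvious uniform bound, and one is forced to rely on the Hamilton-Jacobi-specific comparison argument sketched above. A secondary delicate point is coordinating the double limit $\epsilon\to 0$ followed by $\nu\to 0$ so that the constant $K$ in \eqref{eqn:AprioriEstim-2} depends only on the data of the corollary, which requires tracking carefully how the uniform $L^{\infty}$ and $L^1(L^{\infty})$ bounds on $h$ and its derivatives control $K$ throughout the approximation.
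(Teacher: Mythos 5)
Your proposal is correct in outline but follows a genuinely different route from the paper. The paper also starts from approximations $h^{\nu}$ satisfying the hypotheses of Theorem~\ref{thm:KZPequationExistence} and the $\nu$-uniform bound \eqref{eqn:AprioriEstim} (turned into a uniform $C([0,T];C(\mathbb T^d))$ bound via $H^{1,p}(\mathbb T^d)\subset C(\mathbb T^d)$ for $p>d$), but from there it invokes the half-relaxed-limit machinery: Barles' stability theorem for weak convergence in time produces an upper semicontinuous subsolution $\psi^*=\limsup^*\psi^{\nu}$ and a lower semicontinuous supersolution $\psi_*=\liminf^*\psi^{\nu}$ directly from the uniform sup-norm bound, and the comparison principle then forces $\psi^*=\psi_*$, which simultaneously yields existence, uniqueness, and local uniform convergence of the whole family --- no equicontinuity or Arzel\`a--Ascoli is ever needed. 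Your route instead goes through genuine compactness: Morrey for the spatial modulus and a barrier/comparison argument for the temporal modulus, followed by the classical stability theorem under uniform convergence. This is a standard and workable alternative, but it is where all the technical cost of your proof is concentrated --- as you yourself flag, the time-equicontinuity step is delicate (the barrier must be chosen, e.g.\ quadratic in space, so that the $\nu\triangle$ term it generates is controlled uniformly in $\nu$), and you must also track the double limit in $(\nu,\epsilon)$, whereas the paper couples the regularization of $h$ to $\nu$ and lets the relaxed limits absorb the diagonal argument. In short: the paper's approach buys you existence and full-family convergence from boundedness alone at the price of citing a less elementary stability result; yours is more self-contained but requires you to actually carry out the equicontinuity estimates you only sketch.
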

\begin{remark}
The main point of this corollary is an estimate
\eqref{eqn:AprioriEstim-2}. Existence and uniqueness of viscosity
solutions has been shown in many works (see survey
\cite{CrLiIsh-1992}, books
\cite{FleSon-1993},\cite{BardiCap-Dol1997} and references
therein).
\end{remark}
\begin{proof}
We can find $h^{\nu}\in
L^{\infty}([0,T],H^{1,\frac{2p}{3}}(\mathbb T^d)\cap
H^{1,p}(\mathbb T^d))$, $h^{\nu}\in
C^{\theta}([\eps,T],H^{5,p}(\mathbb T^d))$, $\forall \eps>0$,
$\triangle h^{\nu}\in L^{\infty}(0,T;L^{\infty}(\mathbb T^d))$
such that
$$
\intl_0^T|\nabla h^{\nu}(s)-\nabla h(s)|_{L^{\infty}(\mathbb
T^d)}ds\to 0,\nu\to 0.
$$
Let $\{\psi^{\nu}\}_{\nu>0}\in C(0,T;H^{1,p}(\mathbb T^d))\cap
C^{1,2}((0,T]\times \mathbb T^d)$ be sequence of solutions of the
system \eqref{eqn:KDP-1}-\eqref{eqn:KDP-2} where we use $h^{\nu}$
instead of $h$ in equality \eqref{eqn:KDP-1}. Since
$H^{1,p}(\mathbb T^d)\subset C(\mathbb T^d),p>d$ and estimate
\eqref{eqn:AprioriEstim} we have uniform w.r.t. $\nu$ estimate
\begin{eqnarray}
|\psi^{\nu}|_{C(0,T;C(\mathbb T^d))}^p\leq
K(T,\psi_0,h,d),T>0,p>d.\label{eqn:AprioriEstim-3}
\end{eqnarray}
Then according to Theorem 1.1, p. 175 in \cite{Barles-2006} we
have that there exist uniformly bounded upper continuous
subsolution $\psi^*=\limsupl_{\nu\to 0}^*\psi^{\nu}$ and uniformly
bounded lower continuous supersolution $\psi_*=\liminfl_{\nu\to
0}^*\psi^{\nu}$ of system \eqref{eqn:KDP-3}-\eqref{eqn:KDP-4}.
Therefore, by comparison principle for viscosity solutions of
Hamilton-Jacobi equations (see Theorem 2, p.585 and Remark 3, p.
593 of \cite{CrIshLi-1987}), $\psi^*\leq\psi_*$ and
$\psi=\psi^*=\psi_*$. Thus, $\psi^{\nu}$ locally uniformly
converges to unique viscosity solution $\psi$ of equation
\eqref{eqn:KDP-3}-\eqref{eqn:KDP-4}. Estimate
\eqref{eqn:AprioriEstim} implies that $\psi$ satisfies
\eqref{eqn:AprioriEstim-2}.
\end{proof}
\section{Appendix}
Let $S$ be an interval of the real line of the form $[a,b]$ or
$[a,\infty)$ with $a<b$. Denote $\dot{S}$ interior part of $S$.
\begin{lemma}[Gronwall lemma in differential form]\label{lem:GronwallDifferentialForm}
Let $u,\beta\in C(S)$, $u$ is differentiable in $\dot{S}$ and
$$
u'(t)\leq\beta(t)u(t),t\in \dot{S}.
$$
Then
$$
u(t)\leq u(a)e^{\intl_a^t\beta(s)ds}.
$$
\end{lemma}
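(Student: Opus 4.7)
The plan is to use the standard integrating factor technique. I would introduce the auxiliary function
$$
v(t)=u(t)\exp\Bigl(-\intl_a^t\beta(s)\,ds\Bigr),\quad t\in S,
$$
which is continuous on $S$ and differentiable on $\dot S$ since $u$ is and the exponential factor is $C^1$ in $t$ (because $\beta$ is continuous, so $t\mapsto\intl_a^t\beta(s)\,ds$ is $C^1$ with derivative $\beta(t)$).

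Next I would differentiate: by the product rule,
$$
v'(t)=\bigl(u'(t)-\beta(t)u(t)\bigr)\exp\Bigl(-\intl_a^t\beta(s)\,ds\Bigr),\quad t\in\dot S.
$$
The assumption $u'(t)\leq\beta(t)u(t)$ together with the positivity of the exponential gives $v'(t)\leq 0$ throughout $\dot S$. Hence $v$ is non-increasing on $S$, and by continuity at the left endpoint $v(t)\leq v(a)=u(a)$ for every $t\in S$.

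Multiplying through by the (positive) factor $\exp\bigl(\intl_a^t\beta(s)\,ds\bigr)$ then yields the claimed inequality
$$
u(t)\leq u(a)\exp\Bigl(\intl_a^t\beta(s)\,ds\Bigr).
$$
There is no substantive obstacle here: the only minor point to be careful about is that the monotonicity of $v$ deduced from $v'\leq 0$ on the open interval $\dot S$ transfers to the closed interval $S$, which follows from the continuity of $v$ on $S$ (the mean value theorem on any compact subinterval $[a,t]\subset S$ gives $v(t)-v(a)=v'(\xi)(t-a)\leq 0$ for some $\xi\in\dot S$, using that $v$ is continuous on $[a,t]$ and differentiable on $(a,t)$).
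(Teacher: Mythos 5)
Your proposal is correct and is essentially the paper's own argument: the auxiliary function $v(t)=u(t)\exp\bigl(-\intl_a^t\beta(s)\,ds\bigr)$ you differentiate is exactly the quotient $u(t)/e^{\intl_a^t\beta(s)\,ds}$ that the paper shows has nonpositive derivative. The only difference is cosmetic (product rule versus quotient rule), and your extra remark on passing from $v'\leq 0$ on $\dot S$ to monotonicity on $S$ via continuity is a welcome clarification.
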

\begin{remark}
Notice that there is no assumption that $\beta$ is nonnegative.
\end{remark}
\begin{proof}[Proof of Lemma \ref{lem:GronwallDifferentialForm}]
Let $v(t)=e^{\intl_a^t\beta(s)ds},t\in S$. Then
$$
v'(t)=\beta(t)v(t),t\in S.
$$
Notice that $v(t)>0,t\in S$ and, therefore,
$$
\frac{d}{dt}\frac{u(t)}{v(t)}=\frac{u'v-v'u}{v^2}\leq \frac{\beta
u v-\beta v u}{v^2}=0. t\in\dot{S},
$$
i.e.
$$
\frac{u(t)}{v(t)}\leq \frac{u(a)}{v(a)}=u(a),
$$
and the result follows.
\end{proof}
\begin{proof}[Proof of Theorem \ref{thm:AbstractExistence}]
Denote
$$
Q_T=\{u\in C([0,T];X)\cap C((0,T],Y)\cap
C((0,T];Z)||u|_{Q_T}<\infty\}
$$
where
$$
||\cdot||_{Q_T}=||\cdot||_{C([0,T];X)}+\supl_{t\in(0,T]}t^b|u(t)|_Y+\supl_{t\in(0,T]}t^c|u(t)|_Z.
$$
Then $Q_T$ is a complete metric space.

Fix $u_0\in X$, $g\in L^{\infty}(0,T;W)$ and let $\al$, $\beta$
and $T_1>0$ be such that
\begin{eqnarray}
|R_tu_0+\intl_0^tR_{t-s}gds|_X\leq
\al,t\in(0,T].\label{eqn:AppendixAux-1}\\
t^b|R_tu_0+\intl_0^tR_{t-s}gds|_Y\leq
\beta,\label{eqn:AppendixAux-1'}\\
t^c|R_tu_0+\intl_0^tR_{t-s}gds|_Z\leq
\beta,t\in(0,T_1].\label{eqn:AppendixAux-1''}
\end{eqnarray}
Existence of $\al$ satisfying \eqref{eqn:AppendixAux-1} follows
from the fact that $\{R_t\}_{t\geq 0}$ is $C_0$-semigroup in $X$
and following estimate
\begin{equation}
|\intl_0^tR_{t-s}gds|_X\leq
C\intl_0^t\frac{|g(s)|_W}{|t-s|^a}ds\leq
C|g|_{L^{\infty}(0,t;W)}t^{1-a}.\label{eqn:AppendixAux-2}
\end{equation}
Estimate \eqref{eqn:AppendixAux-2} and assumptions b) and c) of
the theorem imply that for any $\beta>0$ inequalities
\eqref{eqn:AppendixAux-1'}, \eqref{eqn:AppendixAux-1''} are true
for all sufficiently small $T_1>0$.

Let
\begin{eqnarray*}
M(\al,\beta,T)=\left\{u\in Q_T||u|_{C([0,T];X)}\leq
2\al,\supl_{t\in(0,T]}t^b|u(t)|_Y\leq 2\beta,\right.\\
\left.\supl_{t\in(0,T]}t^c|u(t)|_Z\leq 2\beta\right\}.
\end{eqnarray*}
Then $M(\al,\beta,T)$ endowed with norm $Q_T$ is also complete
metric space and we will show that if $\beta,T=T_1>0$ are small
enough than the map $\mathcal{F}:u\mapsto
R_tu_0+\intl_0^tR_{t-s}gds+\intl_0^tR_{t-s}G(u(s))ds$ is a
contraction on $M(\al,\beta,T)$.

We have by \eqref{eqn:a1Cond} following inequality
\begin{eqnarray}
|\mathcal{F}(u)-\mathcal{F}(v)|_X(t)\leq
\intl_0^t|R_{t-s}(G(u)-G(v))|_Xds\nonumber\\
\leq C\intl_0^t\frac{|G(u)-G(v)|_{W}(s)}{|t-s|^a}ds\leq C
\intl_0^t\frac{|u(s)|_Y|u-v|_Z+|v(s)|_Z|u-v|_Y}{|t-s|^a}ds\nonumber\\
\leq
C(\intl_0^t\frac{s^b|u(s)|_Ys^c|u-v|_Z}{s^{b+c}|t-s|^a}ds)+\intl_0^t\frac{s^c|v(s)|_Z|u-v|_Y}{s^{b+c}|t-s|^a}ds\nonumber\\
\leq C\beta |u-v|_{Q_t}t^{1-(a+b+c)}.\label{eqn:AppendixAux-3}
\end{eqnarray}
Similarly,
\begin{eqnarray}
t^b|\mathcal{F}(u)-\mathcal{F}(v)|_Y(t)\leq
t^b\intl_0^t|R_{t-s}(G(u)-G(v))|_Yds\nonumber\\
\leq Ct^b\intl_0^t\frac{|R_{(t-s)/2}(G(u)-G(v))|_X}{(t-s)^b}ds\leq
Ct^b\intl_0^t\frac{|(G(u)-G(v))|_W}{(t-s)^{a+b}}ds\nonumber\\
\leq
Ct^b\intl_0^t\frac{|u(s)|_Y|u-v|_Z+|v(s)|_Z|u-v|_Y}{|t-s|^{a+b}}ds\nonumber\\
\leq
Ct^b(\intl_0^t\frac{s^b|u(s)|_Ys^c|u-v|_Z}{s^{b+c}|t-s|^{a+b}}ds)+\intl_0^t\frac{s^c|v(s)|_Z|u-v|_Y}{s^{b+c}|t-s|^{a+b}}ds\nonumber\\
\leq C\beta |u-v|_{Q_t}t^{1-(a+b+c)}.\label{eqn:AppendixAux-4}
\end{eqnarray}
and
\begin{eqnarray}
t^c|\mathcal{F}(u)-\mathcal{F}(v)|_Z(t)\leq C\beta
|u-v|_{Q_t}t^{1-(a+b+c)}.\label{eqn:AppendixAux-5}
\end{eqnarray}
Combining \eqref{eqn:AppendixAux-3}, \eqref{eqn:AppendixAux-4} and
\eqref{eqn:AppendixAux-5} we get that if $\beta<\frac{1}{C}$ then
$\mathcal{F}$ is a contraction on $Q_t$. Furthermore, it follows
from inequalities \eqref{eqn:AppendixAux-2},
\eqref{eqn:AppendixAux-1}, \eqref{eqn:AppendixAux-1'} and
\eqref{eqn:AppendixAux-1''} that $\mathcal{F}$ is a map from
$M(\al,\beta,T)$ to $M(\al,\beta,T)$. Thus there exists a unique
fixed point $u$ of the map $\mathcal{F}:M(\al,\beta,T)\to
M(\al,\beta,T)$. It remains to show that $u$ has designated
asymptotic behavior when $t\to 0$. It can be done in the same way
as in \cite{Weissler-1980}, p.223-224.

\end{proof}
\par\medskip\noindent
\textbf{Acknowledgement.} The authors are indebted to the
anonymous referee for helpful comments.

\end{document}